\theoremstyle{plain}\newtheorem{definition}{Definition}[section]
\theoremstyle{definition}\newtheorem{theorem}{Theorem}[section]
\theoremstyle{plain}\newtheorem{lemma}[theorem]{Lemma}
\theoremstyle{plain}\newtheorem{coro}[theorem]{Corollary}
\theoremstyle{plain}\newtheorem{prop}[theorem]{Proposition}
\theoremstyle{remark}\newtheorem{remark}{Remark}[section]
\newcommand{\norm}[1]{\left\|#1\right\|}
\newcommand{\Div}{\mathrm{div}\,}
\newcommand{\be}{\begin{equation}}
\newcommand{\ee}{\end{equation}}
 \newcommand{\ba}{\begin{aligned}}
 \newcommand{\ea}{\end{aligned}}
  \newcommand{\f}{\frac}
  \newcommand{\ben}{\begin{enumerate}}
   \newcommand{\een}{\end{enumerate}}
\newcommand{\ti}{\nabla}
\newcommand{\Rmnum}[1]{\expandafter\@slowromancap\romannumeral #1@}
\numberwithin{equation}{section}
\begin{document}
\title{Analytical validation of   the helicity conservation for the  compressible  Euler equations }
\author{Yanqing Wang\footnote{   College of Mathematics and   Information Science, Zhengzhou University of Light Industry, Zhengzhou, Henan  450002,  P. R. China Email: wangyanqing20056@gmail.com},  ~  
      \, Wei Wei\footnote{School of Mathematics and Center for Nonlinear Studies, Northwest University, Xi'an, Shaanxi 710127,  P. R. China  Email: ww5998198@126.com }      ~  and   ~\, Yulin Ye\footnote{Corresponding author. School of Mathematics and Statistics,
      	Henan University,
      	Kaifeng, 475004,
      	P. R. China. Email: ylye@vip.henu.edu.cn} }
\date{}
\maketitle
\begin{abstract}
In \cite{[Moffatt]},   Moffatt introduced the concept of
 helicity  in   an inviscid fluid and    examined the helicity preservation of smooth solution to  barotropic compressible  flow. In this paper,
it is shown that the weak solutions of the above system in Onsager type spaces $\dot{B}^{1/3}_{p,c(\mathbb{N})}$ guarantee   the conservation of the helicity.
The  parallel results of  homogeneous incompressible  Euler equations and the surface quasi-geostrophic equation are also obtained.

  \end{abstract}
\noindent {\bf MSC(2020):}\quad 35Q30, 35Q35, 76D03, 76D05\\\noindent
{\bf Keywords:} compressible Euler equations;  incompressible Euler equations; helicity conservation; surface quasi-geostrophic equation  
\section{Introduction}
\label{intro}
\setcounter{section}{1}\setcounter{equation}{0}
The
Euler equations describing  incompressible inviscid fluids with constant density in $\Omega\times(0,\,T)$ read
\begin{equation}\left\{\begin{aligned}\label{Euler}
&v_{t}+v\cdot \nabla v+\nabla \Pi=0,\\ &\text{div}\, v=0,\\
&v|_{t=0}=v_{0}(x),
\end{aligned}\right.\end{equation}
 where the unknown vector $v $ represents   the flow  velocity field, and the scalar function $\Pi=-\Delta^{-1}\partial_{i}\partial_{j}(v_{i}v_{j})$ stands for  the   pressure. The
initial  velocity $v_0$ satisfies   $\text{div}\,v_0=0$. Let $\Omega$ be the whole space $\mathbb{R}^{d}$ or the periodic domain $\mathbb{T}^{d}$, where $d\geq 2$ is the spatial dimension. We denote $\omega=\text{curl\,}v$ by   the vorticity of the   flow  velocity in \eqref{Euler}, whose equations read
\be
\omega_{t}+v\cdot\nabla\omega-\omega\cdot\nabla v=0,~~\text{div\,}\omega=0.\label{vorticityeq}
\ee
It is well known that there exist two physically conserved  quantities for the regular solutions of the Euler system \eqref{Euler}:
 \begin{itemize}
\item  energy  conservation
\be\label{ec}
\int_{\Omega}|v(x,t)|^{2}dx=\int_{\Omega}|v(x,0)|^{2}dx.
\ee
\item helicity preservation
\be\label{hc}
\int_{\Omega} \omega(x,t)\cdot v(x,t)  dx=\int_{\Omega} \omega(x,0)\cdot v(x,0)  dx.
\ee
 \end{itemize}
 Besides the above two conserved laws  of the Euler equations \eqref{Euler}, one can find others in \cite[p.24]{[MB]}.
In a  seminal paper \cite{[Onsager]},
  Onsager conjectured that weak solutions of the Euler equation with
  H\"older continuity exponent $\alpha>\frac{1}{3}$ do conserve energy and that turbulent
  or anomalous dissipation occurs when $\alpha\leq \frac{1}{3}$. The first attempt to the energy conservation issue for weak solutions of the  Euler equations  \eqref{Euler} was given by Eyink \cite{[Eyink]} in a stronger H\"older space $C_*^\alpha$ with $\alpha >\frac{1}{3}$,
  which is included in $C^\alpha$. Subsequently Constantin-E-Titi \cite{[CET]} successfully solved  the positive part of the
   Onsager's conjecture  and proved that the  energy    is  conserved if a weak solution $v$ is   in the Besov space $L^{3}(0,T; B^{\alpha}_{3,\infty}(\mathbb{T}^{3}))$ with $\alpha>1/3$. Later on,  by deriving a local energy equation which contains a term $D(v)$ representing the dissipation or production of energy caused by the lack of smoothness of solution, Duchon and Robert gave a weaker condition on the solutions to conserve energy in  \cite{[DR]}, that is, if $v$ satisfies $\int|v(t,x+\xi)-v(t,x)|^3dx \leq C(t)|\xi|\sigma(|\xi|)$, where $C(t)\in L^1(0,T)$ and $\sigma(a)\rightarrow 0$ as $a\rightarrow 0$, then energy is conserved. Recently,
 Cheskidov-Constantin-Friedlander-Shvydkoy \cite{[CCFS]}  refined the above spaces  to the
   critical space $L^{3}(0,T; B^{1/3}_{3,c(\mathbb{N})})$,  where
$
B^{1/3}_{3,c(\mathbb{N})}=\{v\in B^{1/3}_{3,\infty}: \lim_{q\rightarrow\infty}2^{q}\|\Delta_{q}v\|^{3}_{L^{3}}=0\}$ and  $\Delta_{q}$ represents a smooth restriction of $v$ into Fourier modes of order $2^q$. In the follow-up paper, Shvydkoy \cite{[Shvydkoy2009],[Shvydkoy2010]} stated that this condition of energy conservation can be equivalent to
$$
\lim_{z\rightarrow0}\f{\int_{0}^{T}\int_{\mathbb{R}^d}|v(x+z,t)-v(x,t)|^{3}dxdt}{|z|}=0.
$$ It is worth remarking that $B^{1/3}_{3,q}$ for $1\leq q<\infty$ are included in $B^{1/3}_{3,c(\mathbb{N})}$, which is called as the Onsager-critical space.
  In this direction, the recent progress  can be found in
 \cite{[FW2018]}.
  The proof of the other part of
   Onsager's conjecture
   in three-dimensional space is given by Isett \cite{[Isett]}, which adopted the approach developed
   by De Lellis and  Szekelyhidi \cite{[DS0],[DS1],[DS2],[DS3]}.

  Next, we turn our attention to the helicity conservation \eqref{hc} in the Euler equations.
The concept of the
 helicity  in an inviscid fluid was introduced by Moffatt in \cite{[Moffatt]}. As pointed in \cite{[Moffatt],[MT]},
helicity is important at a fundamental level in relation to flow kinematics
because it admits topological interpretation in relation to the linkage or
linkages of vortex lines of the flow.
Indeed,
Moffatt \cite{[Moffatt]} examined that the    helicity of smooth solutions to
the  following compressible Euler equations  \eqref{CEuler} is invariant in time,
\be\left\{\ba\label{CEuler}
&\rho_t+\nabla \cdot (\rho v)=0, \\
&(\rho v)_{t} +\Div(\rho v\otimes v)+\nabla
\pi(\rho )=0.
\ea\right.\ee
The proof   in \cite{[Moffatt]} relies on the transport formula involving moving region. To the best of our knowledge, there has been little literature  concerning the  helicity preservation  of weak solutions to
the    compressible Euler equations \eqref{CEuler}. In  this direction,  most previous works     focus  on
 the homogeneous incompressible Euler equations \eqref{Euler}.  In particular,
the study of helicity conservation for the incompressible Euler equations \eqref{Euler}
was originated from
Chae's interesting works   \cite{[Chae1],[Chae]}.
 In \cite{[Chae]}, Chae considered the preservation of the helicity  of weak solutions to the Euler equations  via $\omega$
in spaces $ L^{3}(0,T;B^{\alpha}_{\f95,\infty}) $ with $\alpha>\f13$.
Subsequently,
it is shown that $v \in L^{r_{1}}(0,T;\dot{B}^{\alpha}_{\f92,q})$ and
$\omega \in  L^{r_{2}}(0,T;\in \dot{B}^{\alpha}_{\f95,q})$, with $\alpha>\f13$, $q\in [2,\infty]$, $r_1\in [2,\infty]$, $r_2\in [1,\infty]$ and $\frac{2}{r_{1}}+\frac{1}{r_{2}}=1$,  ensure  the helicity conservation of weak solutions in \cite{[Chae1]}.  After that,  Cheskidov-Constantin-Friedlander-Shvydkoy \cite{[CCFS]} established the helicity conservation class based on the velocity $v$ in  Onsager critical space
\be\label{ccfs}
 L^{3}(0,T;B^{\f23}_{3,c(\mathbb{N})}).
\ee
 Very recently, De Rosa \cite{[De Rosa]} proved that the helicity is a constant provided that
$v\in L^{2r}(0,T;W^{\theta, 2p})$ and  $ \omega\in L^{\kappa}(0,T;W^{\alpha, q})$ with  $\f{1}{p}+\f1q=\f{1}{r}+\f{1}{\kappa}$  and $2\theta+\alpha\geq1$.
Notice that there holds the embedding relation $W^{\theta,p}\hookrightarrow B^{\theta}_{p,c(\mathbb{N})}$, which  can be found in  \cite{[De Rosa],[CCFS]}.
  The first objective of this paper is to address  the question
  how much regularity is needed for a weak solution  of the compressible Euler equations  \eqref{CEuler}  to conserve the helicity.
To this end,    we shall need to give the equation for the vorticity $\omega$ from \eqref{CEuler}. Our observation is that for any smooth solutions $(\rho, v)$ of compressible Euler equations \eqref{CEuler} with $0<c_1\leq \rho\leq c_2<\infty$, dividing the both sides of the momentum equation $\eqref{CEuler}_2$ by $\rho$, we can reformulate the system    \eqref{CEuler} as
\be\left\{\ba\label{rCEuler1}
&\rho_t+\nabla \cdot (\rho v)=0, \\
& v_{t} + v\cdot\nabla v+\nabla
\Pi(\rho )=0,
\ea\right.\ee
where  $\Pi(\rho )= \int_{1}^{\rho}\f{\pi'(s)}{s}ds$.
 Then the momentum equation  in \eqref{rCEuler1} is the same as that in
the incompressible case.  However, this is not valid for weak solutions $(\rho,v)$ even with $0<c_1\leq \rho\leq c_2<\infty$. To use the equations \eqref{rCEuler1},  we shall apply the techniques due to
Feireisl,   Gwiazda,   Swierczewska-Gwiazda and   Wiedemann in   \cite{[EGSW]} to show that the weak solutions of    compressible Euler equations  \eqref{CEuler} in Onsager type spaces $B^{1/3}_{p,c(\mathbb{N})}$  are also the weak solutions of equations \eqref{rCEuler1}.
Now we state our first main result on the helicity conservation of weak solutions for the  compressible Euler equations \eqref{rCEuler1} as follows.
\begin{theorem}\label{the01}
	Let $(\rho,v)$ be a solution of \eqref{CEuler} in the sense of distributions and $\text{div\,}v,\text{curl\,}v\in C([0,T];L^{\f{2d}{d+1}}(\Omega))$. Assume that there exist two positive constants $c_1$ and $c_2$ such that
	\begin{equation}\label{a1}\ba
	&0<c_1\leq \rho \leq c_2<\infty,~	~\rho \in L^3(0,T;\dot{B}^{\frac{1}{3}}_{3,c(\mathbb{N})}),~\rho v \in L^3(0,T;\dot{B}^{\frac{1}{3}}_{3,\infty}),\\
&v \in L^3(0,T;B^{\frac{1}{3}}_{3,c(\mathbb{N})}),
  \pi\in C^2[c_1,c_2],
  \ea\end{equation}
  and one of the following four conditions is satisfied
 \begin{enumerate}[(1)]
 	\item$\omega\in L^{3}(0,T;\dot{B}^{\frac{1}{3}}_{3,\infty});$
 	 \item $\omega \in L^3(0,T;L^3(\Omega));$
\item $v\in L^{\f{p}{p-2}}(0,T;L^{\f{q}{q-2}}(\Omega)), \omega\in L^{p}(0,T;L^{q}(\Omega)),$ with $2< p,q<\infty$;
\item $\text{div\,}v,\omega\in L^{3}(0,T;L^{\f{9}{4}}(\Omega)),$ with $d=3$.
    \end{enumerate}
	Then the helicity of the compressible Euler equation \eqref{CEuler} is conserved, that is, for any $0<t<T$,
$$	
\int_{\Omega} \omega(x,t)\cdot v(x,t)  dx=\int_{\Omega} \omega(x,0)\cdot v(x,0)  dx.
$$	
	\end{theorem}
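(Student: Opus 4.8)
The plan is to reduce \eqref{CEuler} to the incompressible-type system \eqref{rCEuler1} and then run a Constantin--E--Titi mollification scheme \cite{[CET]} for the helicity, the decisive structural fact being that $\Div\omega=0$ holds whether or not $v$ is divergence free. The first, and I expect hardest, step is to upgrade the distributional solution $(\rho,v)$ of \eqref{CEuler} to a distributional solution of the convective system \eqref{rCEuler1}. Formally this is the product--rule identity
\[
(\rho v)_t+\Div(\rho v\otimes v)=\big[\rho_t+\Div(\rho v)\big]\,v+\rho\,\big[v_t+v\cdot\nabla v\big],
\]
combined with the continuity equation and with $\rho^{-1}\nabla\pi(\rho)=\nabla\Pi(\rho)$, $\Pi(\rho)=\int_1^\rho\pi'(s)/s\,ds$. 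For weak solutions these products are not a priori meaningful, so I would mollify, test against smooth fields, and pass to the limit, controlling the renormalization commutators by the Onsager bounds $\rho\in\dot B^{1/3}_{3,c(\mathbb{N})}$, $v\in B^{1/3}_{3,c(\mathbb{N})}$, $\rho v\in\dot B^{1/3}_{3,\infty}$ and the uniform bound $0<c_1\le\rho\le c_2$; the hypothesis $\pi\in C^2[c_1,c_2]$ guarantees that $\Pi(\rho)$ inherits the Besov regularity of $\rho$ by composition. This is precisely the renormalization technique of \cite{[EGSW]}, and it is the only place where the density and the genuinely compressible structure enter.

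Granting \eqref{rCEuler1}, taking the curl of the momentum equation annihilates the pressure and gives $\omega_t=\mathrm{curl}\,(v\times\omega)$, where I have used $v\cdot\nabla v=\tfrac12\nabla|v|^2-v\times\omega$. Then
\[
\frac{d}{dt}\int_\Omega\omega\cdot v\,dx=\int_\Omega\omega_t\cdot v+\omega\cdot v_t\,dx,
\]
and every term vanishes after integration by parts \emph{because} $\Div\omega=0$: the pressure contributes $\int_\Omega\omega\cdot\nabla\Pi=-\int_\Omega(\Div\omega)\,\Pi=0$, the $\tfrac12\nabla|v|^2$ part contributes $0$ for the same reason, and the two quadratic pieces vanish through $\mathrm{curl}\,v=\omega$ and $\omega\cdot(v\times\omega)=0$. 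Crucially none of these cancellations uses $\Div v=0$, so the compressible case is formally identical to the incompressible one and the helicity is formally conserved.

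To make this rigorous I would mollify the momentum equation, set $v^\varepsilon=v*\eta_\varepsilon$ and $\omega^\varepsilon=(\mathrm{curl}\,v)*\eta_\varepsilon=\mathrm{curl}\,v^\varepsilon$, and differentiate the regularized helicity. Using $\partial_t v^\varepsilon=-(v\cdot\nabla v)^\varepsilon-\nabla\Pi^\varepsilon$ and $\partial_t\omega^\varepsilon=-\mathrm{curl}\,(v\cdot\nabla v)^\varepsilon$, together with the same cancellations (all valid since $\Div\omega^\varepsilon=0$ and $(v^\varepsilon\times\omega^\varepsilon)\cdot\omega^\varepsilon\equiv0$), the whole right--hand side collapses to a single helicity flux,
\[
\frac{d}{dt}\int_\Omega\omega^\varepsilon\cdot v^\varepsilon\,dx=2\int_\Omega\big[(v\times\omega)^\varepsilon-v^\varepsilon\times\omega^\varepsilon\big]\cdot\omega^\varepsilon\,dx.
\]

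It then remains to show this flux tends to $0$ in $L^1(0,T)$. With the Constantin--E--Titi increment identity \cite{[CCFS]}
\[
(v\times\omega)^\varepsilon-v^\varepsilon\times\omega^\varepsilon=\int\eta_\varepsilon(\xi)\,\delta_\xi v\times\delta_\xi\omega\,d\xi-(v-v^\varepsilon)\times(\omega-\omega^\varepsilon),\qquad \delta_\xi f(x)=f(x-\xi)-f(x),
\]
I would estimate the spatial integral by H\"older, distributing the regularity between the two factors according to which hypothesis is in force: $v\in B^{1/3}_{3,c(\mathbb{N})}$ supplies $\|\delta_\xi v\|_{L^3}=o(|\xi|^{1/3})$, while $(1)$ gives $\|\delta_\xi\omega\|_{L^3}\lesssim|\xi|^{1/3}$, so that after pairing with $\omega^\varepsilon\in L^3$ the flux is $O(\varepsilon^{2/3})$; cases $(2)$ and $(3)$ replace the Besov smoothness of $\omega$ by its integrability in $L^3$, respectively $L^q$, and use continuity of translations to send the $\omega$--increment to $0$, the stated exponent relations closing the H\"older pairing in space and time; and $(4)$ is the three--dimensional borderline case in which the extra $\Div v$ contributions appearing when the convective term is expanded through the vorticity equation are balanced against $\omega$ in $L^{9/4}$ via the embedding $\dot B^{1/3}_3(\mathbb{R}^3)\hookrightarrow L^{9/2}$. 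Finally I would integrate the flux identity over $[0,t]$ and let $\varepsilon\to0$: the continuity hypothesis $\Div v,\mathrm{curl}\,v\in C([0,T];L^{2d/(d+1)})$ together with $v\in L^3$ ensures both that $t\mapsto\int_\Omega\omega\cdot v\,dx$ is well defined and continuous and that $\int_\Omega\omega^\varepsilon\cdot v^\varepsilon\,dx\to\int_\Omega\omega\cdot v\,dx$, so the left--hand side converges to the helicity increment while the right--hand side vanishes, which yields the conservation of helicity.
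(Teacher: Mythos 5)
Your proposal follows essentially the same route as the paper: Proposition \ref{propo1} is exactly your first step (upgrading the distributional solution of \eqref{CEuler} to one of \eqref{rCEuler1} by the mollification/commutator technique of \cite{[EGSW]}, with the four commutators controlled through the Besov hypotheses \eqref{a1}), and the paper then derives precisely your flux identity $\frac{d}{dt}\int_\Omega v^{\varepsilon}\cdot\omega^{\varepsilon}\,dx=-2\int_\Omega\big((\omega\times v)^{\varepsilon}-\omega^{\varepsilon}\times v^{\varepsilon}\big)\cdot\omega^{\varepsilon}\,dx$ and kills it case by case with the Constantin--E--Titi increment identity (Lemma \ref{lem2.3}, Lemma \ref{lem2.7}).

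One piece of your bookkeeping in case (1) does not close as stated. You pair the commutator with ``$\omega^{\varepsilon}\in L^3$'' and announce the flux is $O(\varepsilon^{2/3})$, but the hypothesis there is only $\omega\in L^{3}(0,T;\dot{B}^{1/3}_{3,\infty})$, which does not give a uniform-in-$\varepsilon$ bound on $\|\omega^{\varepsilon}\|_{L^3(0,T;L^3)}$. The correct accounting, which is what the paper does, is $\|\omega^{\varepsilon}\|_{L^3(0,T;L^3)}\leq C\|\nabla v^{\varepsilon}\|_{L^3(0,T;L^3)}=o(\varepsilon^{-2/3})$ from $v\in L^3(0,T;B^{1/3}_{3,c(\mathbb{N})})$ and Lemma \ref{lem2.2}, so that the flux is $o(\varepsilon^{2/3})\cdot o(\varepsilon^{-2/3})=o(1)$, not $O(\varepsilon^{2/3})$; this is a small repair entirely within your own framework. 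Your sketch of case (4) is also vaguer than the paper's: no extra $\mathrm{div}\,v$ terms appear in the flux identity, and the role of $\mathrm{div}\,v\in L^3(0,T;L^{9/4})$ is simply to combine with $\omega\in L^3(0,T;L^{9/4})$ through the elliptic estimate $\|\nabla v\|_{L^{9/4}}\leq C(\|\mathrm{curl}\,v\|_{L^{9/4}}+\|\mathrm{div}\,v\|_{L^{9/4}})$ and Sobolev embedding to give $v\in L^{3}(0,T;L^{9}(\Omega))$, after which case (4) is the instance $p=3$, $q=9/4$ of case (3).
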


 \begin{remark}
  The   helicity conservation of weak solutions for the compressible Euler equations  \eqref{CEuler}
in Onsager's critical space $B^{\theta}_{p,c(\mathbb{N})}$ and $B^{\theta}_{p,\infty}$ in terms of the velocity and the vorticity  is studied in
  Theorem \ref{the1.1} and   Corollary  \ref{coro1}. This is  consistent with Moffatt's work \cite{[Moffatt]}.  The regularity condition that $\text{div\,}v,\text{curl\,}v\in C([0,T];L^{\f{2d}{d+1}}(\Omega))$ is necessary to ensure that the  helicity is well defined.
\end{remark}
\begin{remark}
The constraint that $\rho v\in L^3(0,T;\dot{B}^{\frac{1}{3}}_{3,\infty})$ can be removed, if the condition that $\rho \in L^3(0,T;\dot{B}^{\frac{1}{3}}_{3,c(\mathbb{N})}) $ is replaced by $\rho \in L^\infty(0,T;\dot{B}^{\frac{1}{3}}_{\infty,c(\mathbb{N})})$ due to Lemma \ref{lem2.5}.
\end{remark}
 \begin{remark}
This theorem is valid for the compressible isentropic Euler system. Indeed, for the isentropic pressure law   $\pi(\rho)=\kappa\rho^{\gamma}$ with $\gamma>1$ and $\kappa=\f{(\gamma-1)^{2}}{4\gamma},$ the pressure condition that $\pi\in C^2[c_1,c_2]$ is obviously satisfied if  $0<c_1\leq \rho \leq c_2<\infty$.
 \end{remark}
  \begin{remark}
 The sufficient conditions (3) in this theorem and  (4)-(5) in  Theorem  \ref{the1.1} below are partially motivated by the recent works \cite{[WY]} on energy  balance of the Navier-Stokes equations based on the combination   of the velocity and its gradient.
\end{remark}
 \begin{remark} Since the following embedding relations in $\mathbb{R}^{3}$ is valid
 $$W^{1,\f94}\hookrightarrow H^{\f76}\hookrightarrow B^{\f23}_{3,c(\mathbb{N})},
 $$
 the fourth criterion of this theorem is also in Onsager's critical space.
 It is an interesting question to show
the velocity $v$ in  Onsager's critical space
$   L^{3}(0,T;B^{\f23}_{3,c(\mathbb{N})}) $  keeping the  helicity conservation for the compressible Euler equations.
\end{remark}
In what follows, we formulate our second main result involving helicity conservation for the
incompressible Euler equations \eqref{Euler}.
 \begin{theorem}\label{the1.1} Let $ v$ be a  weak solution of  incompressible Euler equations \eqref{Euler} in the sense of Definition \ref{eulerdefi}  and $\omega\in C([0,T];L^{\f{2d}{d+1}}(\Omega))$. Then the helicity  conservation \eqref{hc} is valid provided that one of the following five conditions is
satisfied
 \begin{enumerate}[(1)]
 \item $ v\in L^{k} (0,T;\dot{B}^{\alpha}_{p,c(\mathbb{N})} ), \omega\in L^{ \ell  } (0,T; \dot{B}^{\beta}_{q,\infty}),$ with $\f2k+\f1\ell=1,\f2p+\f1q =1,2\alpha+\beta\geq1;$
\item $ v\in L^{k} (0,T;\dot{B}^{\alpha}_{p,\infty} ), \omega\in L^{ \ell  } (0,T; \dot{B}^{\beta}_{q,c(\mathbb{N})}),$ with $\f2k+\f1\ell=1,\f2p+\f1q =1,2\alpha+\beta\geq1;$
 \item
    $\omega\in L^{3} (0,T; \dot{B}^{\f13}_{\f{3d}{d+2},c(\mathbb{N})});$

\item $v\in L^{\f{p}{p-2}}(0,T;L^{\f{q}{q-2}}(\Omega)), \omega\in L^{p}(0,T;L^{q}(\Omega)),$ with $2<p, q<\infty$;
          \item $v\in L^{\f{2p}{p-1}}(0,T;L^{\f{2q}{q-1}}(\Omega) ) , \text{curl}\,\omega\in L^{p}(0,T;L^{q}(\Omega)),$ with $1\leq p\leq\infty,1\leq q<\infty$.
     \end{enumerate}
\end{theorem}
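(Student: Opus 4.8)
The plan is to reproduce, at the level of weak solutions, the pointwise helicity–flux identity that holds for smooth solutions, and to show that the regularization defect vanishes in the limit under each of the five hypotheses. First I would record the smooth identity: combining the momentum equation \eqref{Euler} with the vorticity equation \eqref{vorticityeq} and using $\text{div}\,v=\text{div}\,\omega=0$, one finds
\[
\partial_t(\omega\cdot v)+\text{div}\!\left(v\,(\omega\cdot v)-\omega\,\frac{|v|^2}{2}+\Pi\,\omega\right)=0,
\]
so that after integrating over $\Omega$ the flux drops and \eqref{hc} follows. The entire difficulty is to recover this cancellation when $v$ is only a weak solution, and the well-definedness of $\int\omega\cdot v$ is guaranteed by the assumed $\omega\in C([0,T];L^{2d/(d+1)})$ paired with $v\in L^{2d/(d-1)}$ via Sobolev embedding.

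Next I would mollify in space, writing $f^\epsilon=f*\eta_\epsilon$ and noting $\omega^\epsilon=\text{curl}\,v^\epsilon$. Differentiating the regularized helicity and inserting the mollified equations gives
\[
\frac{d}{dt}\int_\Omega \omega^\epsilon\cdot v^\epsilon\,dx=-\int_\Omega (v\cdot\nabla\omega)^\epsilon\cdot v^\epsilon\,dx+\int_\Omega(\omega\cdot\nabla v)^\epsilon\cdot v^\epsilon\,dx-\int_\Omega \omega^\epsilon\cdot(v\cdot\nabla v)^\epsilon\,dx-\int_\Omega \omega^\epsilon\cdot\nabla\Pi^\epsilon\,dx.
\]
The pressure term vanishes identically since $\text{div}\,\omega^\epsilon=0$. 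Rewriting the three transport terms through $v\cdot\nabla\omega_i=\partial_j(v_j\omega_i)$, $\omega\cdot\nabla v_i=\partial_j(\omega_j v_i)$ and $v\cdot\nabla v_i=\partial_j(v_jv_i)$, integrating by parts and subtracting off the smooth divergence structure, I would express the defect à la Duchon–Robert as an integral over spatial increments $\delta_\xi v(x)=v(x+\xi)-v(x)$, $\delta_\xi\omega(x)=\omega(x+\xi)-\omega(x)$, schematically
\[
\mathcal{D}_\epsilon(t)=\int \nabla\eta_\epsilon(\xi)\cdot\Big(\delta_\xi v\,(\delta_\xi v\cdot\delta_\xi\omega)-\tfrac12\,\delta_\xi\omega\,|\delta_\xi v|^2\Big)\,d\xi,
\]
which is cubic in the increments and therefore carries the needed derivative budget.

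The core is to show $\int_0^T\mathcal{D}_\epsilon(t)\,dt\to0$. Using $|\nabla\eta_\epsilon(\xi)|\lesssim\epsilon^{-d-1}\mathbf{1}_{|\xi|\le\epsilon}$, Hölder in $x$ with $\frac2p+\frac1q=1$, and $\|\delta_\xi f\|_{L^r}\lesssim|\xi|^{s}\|f\|_{\dot B^{s}_{r,\infty}}$, each term is bounded by $\epsilon^{2\alpha+\beta-1}\|v\|_{\dot B^\alpha_{p,\infty}}^2\|\omega\|_{\dot B^\beta_{q,\infty}}$, followed by Hölder in time with $\frac2k+\frac1\ell=1$. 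Thus $2\alpha+\beta>1$ forces decay, the critical equality $2\alpha+\beta=1$ gives only boundedness, and membership in the $c(\mathbb{N})$ class in (1) and (2) upgrades this to genuine vanishing, exactly as in the energy case \eqref{ccfs}. Criterion (3) reduces to (1)/(2) through Biot–Savart: $\omega\in\dot B^{1/3}_{3d/(d+2),c(\mathbb N)}$ yields $v\in\dot B^{4/3}_{3d/(d+2)}\hookrightarrow\dot B^{2/3}_{3}$ and $\omega\hookrightarrow\dot B^{-1/3}_{3}$, so $2\alpha+\beta=1$ critically. Criteria (4) and (5) bound the same increment integrals directly in Lebesgue norms—(5) moving one derivative onto $\omega$ so that $\text{curl}\,\omega$ appears—using the mixed velocity/vorticity pairs, in the spirit of \cite{[WY]}. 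Finally I would integrate in $t$ and let $\epsilon\to0$ to conclude $\int_\Omega\omega\cdot v\,dx$ is constant.

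The hard part will be the trilinear commutator for the helicity flux, in contrast to the bilinear one governing energy: because $\omega$ costs one extra derivative relative to $v$, the naive grouping of the three transport terms does not cancel, and one must isolate precisely the combination reproducing $v(\omega\cdot v)-\omega|v|^2/2$ before a single increment identity becomes applicable. Verifying that each of the five exponent configurations really lands in the regime $2\alpha+\beta\ge1$—after the Biot–Savart gain in (3) or after shifting a derivative onto $\omega$ in (5)—is where the genuine work lies; the limit passage and the routine well-posedness of the pairing are comparatively straightforward.
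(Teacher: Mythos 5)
Your overall scaling analysis ($\varepsilon^{2\alpha+\beta-1}$, with membership in the $c(\mathbb{N})$ class upgrading boundedness to vanishing at criticality) matches the paper's, but your route diverges at the decisive step and leaves genuine gaps. The paper never needs the trilinear Duchon--Robert increment identity that you defer as ``the hard part.'' It rewrites the momentum equation in Lamb-vector form $v_t+\omega\times v+\nabla(\Pi+\frac12|v|^2)=0$ together with $\omega_t=\text{curl}(v\times\omega)$, so that after mollification
\begin{equation*}
\frac{d}{dt}\int_\Omega v^\varepsilon\cdot\omega^\varepsilon\,dx
=-2\int_\Omega\Big((\omega\times v)^\varepsilon-\omega^\varepsilon\times v^\varepsilon\Big)\cdot\omega^\varepsilon\,dx
=2\int_\Omega\Big((v\otimes v)^\varepsilon-v^\varepsilon\otimes v^\varepsilon\Big)\nabla\omega^\varepsilon\,dx,
\end{equation*}
i.e.\ everything reduces to a single \emph{bilinear} Constantin--E--Titi commutator tested against $\omega^\varepsilon$ or $\nabla\omega^\varepsilon$; no cubic flux combination has to be isolated. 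Your schematic $\mathcal{D}_\epsilon$ is plausible in form but unverified, and since deriving the exact trilinear structure is precisely where you acknowledge the work lies, the proposal as written does not yet contain a proof of cases (1)--(3).

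Moreover, the single cubic grouping cannot cover all five cases. Under hypothesis (4) the term $\delta_\xi v\,(\delta_\xi v\cdot\delta_\xi\omega)$ carries two velocity factors and one vorticity factor, which is not even integrable from $v\in L^{q/(q-2)}$, $\omega\in L^{q}$ once $q>3$ (the admissible trilinear pairing under (4) is one velocity against two vorticities), and in any case mere $L^r$-continuity of translation supplies no rate to beat the $\epsilon^{-1}$ mass of $\nabla\eta_\epsilon$. The paper instead keeps the grouping $\int((\omega\times v)^\varepsilon-\omega^\varepsilon\times v^\varepsilon)\cdot\omega^\varepsilon$, where the commutator tends to $0$ strongly in $L^{p/(p-1)}(0,T;L^{q/(q-1)})$ by Lemma \ref{lem2.7} while $\omega^\varepsilon$ is merely bounded. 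Relatedly, your reduction of (3) to (1)/(2) with $\beta=-\frac13$ is not licensed by your own estimates: the finite-difference characterization $\|\delta_\xi\omega\|_{L^q}\lesssim|\xi|^\beta$ fails for negative $\beta$, so the increment formula cannot absorb $\delta_\xi\omega$ there; you must keep the derivative on the mollified vorticity, e.g.\ $\|\nabla\omega^\varepsilon\|_{L^3}\le C\|\nabla^2v^\varepsilon\|_{L^3}=o(\varepsilon^{-4/3})$ from $v\in\dot B^{2/3}_{3,c(\mathbb{N})}$, which is exactly the paper's alternative argument for (3). Your treatment of (1)--(2) would go through once the increment identity is actually established, but (3) and (4) require regrouping along the lines above.
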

\begin{remark}
Theorem \ref{the1.1}  is an improvement of corresponding results in \cite{[De Rosa],[Chae1],[Chae]}.
 \end{remark}
\begin{remark}
  An   approach to \eqref{ccfs} in  homogeneous Besov spaces is   presented in the proof of Theorem \ref{the1.1}.
\end{remark}
  Recently, when dimension $d=3$, the authors in \cite{[LWY]} provide a proof of    energy conservation criteria for the Euler equations
 in terms of  the vorticity
 \be\label{lwy}
   \omega\in L^{3}(0,T;L^{\f{9}{5}}(\Omega)),
 \ee
 which is also deduced from  Cheskidov-Constantin-Friedlander-Shvydkoy's  classical  condition that $L^{3}(0,T; B^{1/3}_{3,c(\mathbb{N})})$ (see also related work \cite{[CLNS]}).
In the spirit of \cite{[LWY]}, we have
\begin{coro}\label{coro1}
Let $ v$ be a  weak solution of the 3D  incompressible Euler equations \eqref{Euler} in the sense of Definition \ref{eulerdefi}  and $\omega\in C([0,T];L^{\f{3}{2}}(\Omega))$. Then the helicity  conservation \eqref{hc} is valid provided that one of the following two conditions is
satisfied
 \begin{enumerate}[(1)] \item      $  \omega\in L^{3}(0,T;L^{\f{9}{4}}(\Omega))$;
\item      $  \text{curl\,}\omega\in L^{3}(0,T;L^{\f{9}{7}}(\Omega))$.
     \end{enumerate}
\end{coro}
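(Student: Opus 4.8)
The plan is to derive both criteria directly from Theorem \ref{the1.1} by trading vorticity regularity for velocity integrability through the Biot--Savart law and the Sobolev embedding in $\mathbb{R}^{3}$; no new helicity-flux estimate is needed. Observe first that at $d=3$ the continuity hypothesis $\omega\in C([0,T];L^{3/2}(\Omega))$ of the corollary is exactly the assumption $\omega\in C([0,T];L^{\f{2d}{d+1}}(\Omega))$ of Theorem \ref{the1.1}, and it already guarantees that the helicity is well defined: since $v$ is one derivative smoother than $\omega$, one has $v\in C([0,T];L^{3}(\Omega))$ via $\dot{W}^{1,3/2}\hookrightarrow L^{3}$ (as $\tfrac{2}{3}-\tfrac13=\tfrac13$), whence $\omega\cdot v\in L^{1}(\Omega)$ pointwise in time.

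For the first criterion I would start from $\omega\in L^{3}(0,T;L^{9/4}(\Omega))$. Writing $v=\nabla\times(-\Delta)^{-1}\omega$, Calder\'on--Zygmund theory gives $\nabla v\in L^{3}(0,T;L^{9/4}(\Omega))$ (here $9/4\in(1,\infty)$), and the Gagliardo--Nirenberg--Sobolev inequality in dimension three --- for which the conjugate exponent of $9/4$ equals $9$ because $\tfrac{4}{9}-\tfrac13=\tfrac19$ --- yields $v\in L^{3}(0,T;L^{9}(\Omega))$. I then apply condition (4) of Theorem \ref{the1.1} with $(p,q)=(3,9/4)$: one has $2<p,q<\infty$, the required vorticity bound $\omega\in L^{p}(0,T;L^{q})$ is the hypothesis, and the required velocity bound is $v\in L^{\f{p}{p-2}}(0,T;L^{\f{q}{q-2}})=L^{3}(0,T;L^{9})$, which is exactly what the embedding provides. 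Helicity conservation follows.

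For the second criterion I would argue identically, now gaining two derivatives. From $\mathrm{curl}\,\omega\in L^{3}(0,T;L^{9/7}(\Omega))$ and the identity $-\Delta v=\mathrm{curl}\,\omega$ (valid since $\mathrm{div}\,v=0$), elliptic regularity gives $D^{2}v\in L^{3}(0,T;L^{9/7}(\Omega))$, and iterating the Sobolev embedding ($\dot{W}^{2,9/7}\hookrightarrow L^{9}$ in $\mathbb{R}^{3}$, since $\tfrac{7}{9}-\tfrac23=\tfrac19$) gives again $v\in L^{3}(0,T;L^{9}(\Omega))$. Choosing $(p,q)=(3,9/7)$ in condition (5) of Theorem \ref{the1.1}, for which $\f{2p}{p-1}=3$ and $\f{2q}{q-1}=9$, the two required memberships $\mathrm{curl}\,\omega\in L^{p}(0,T;L^{q})$ and $v\in L^{\f{2p}{p-1}}(0,T;L^{\f{2q}{q-1}})=L^{3}(0,T;L^{9})$ both hold, and the conclusion follows.

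The substantive content is thus the scaling bookkeeping that makes both vorticity hypotheses land on the single velocity class $L^{3}(0,T;L^{9})$ dictated by Theorem \ref{the1.1}; once the Sobolev exponents are matched the argument is immediate. The only point requiring care is the use of the homogeneous Sobolev inequalities: on $\mathbb{R}^{3}$ they apply verbatim, while on $\mathbb{T}^{3}$ one works with the mean-zero representative of $v$, which is harmless because a constant shift in $v$ leaves the helicity unchanged, as $\int_{\Omega}\omega\,dx=\int_{\Omega}\mathrm{curl}\,v\,dx=0$.
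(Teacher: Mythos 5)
Your proposal is correct and is essentially the paper's intended argument: the paper does not write out a separate proof of Corollary \ref{coro1}, but the identical elliptic-plus-Sobolev bookkeeping ($\|\nabla v\|_{L^{9/4}}\leq C\|\omega\|_{L^{9/4}}$, $\|v\|_{L^{9}}\leq C\|\nabla v\|_{L^{9/4}}$, then reduction to the $L^{p}$--$L^{q}$ criteria of Theorem \ref{the1.1}) appears verbatim in the proof of case (4) of Theorem \ref{the01}, and your exponent choices $(p,q)=(3,9/4)$ for condition (4) and $(p,q)=(3,9/7)$ for condition (5) are exactly right.
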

\begin{remark}
 From \eqref{lwy} and the condition (1) in Corollary \ref{coro1}, we see that there may exist   a weak solution of the Euler equations that keep  the   energy   rather than the helicity. This is also previously
pointed out by Chae in  \cite{[Chae]}.
 \end{remark}
\begin{remark}
 In dimension two, the criterion  (1) in Corollary \ref{coro1} can be replaced by a new condition that
$\omega\in L^{3}(0,T;L^{2^{+}}(\Omega))$.
\end{remark}
  The starting point
  of  (1) in Theorem \ref{the01}  and  (1)-(2) in Theorem  \ref{the1.1} is the study of the cross-helicity in the ideal MHD equations.
Precisely, for any vector functions $A,B$, there hold the identities below$$\ba
&\nabla(A\cdot B)=A\cdot\nabla B+B\cdot\nabla A+A\times\text{curl}B+B\times\text{curl}A,\\
&\text{curl}(A\times B)=A \text{div} B-B \text{div}  A+B\cdot\nabla A-A\cdot\nabla B,
\ea$$
which together with the condition that $ \Div \omega =0$ lead to
 \be\label{identity}\ba
&v\cdot\nabla v = \f{1}{2}\nabla |v|^{2}+\omega\times v,\\
&\text{curl}(\omega\times v)=\omega \text{div} v+ v\cdot\nabla \omega-\omega\cdot\nabla v.\ea\ee
Then we can use \eqref{identity} to  reformulate the compressible Euler system \eqref{rCEuler1}  and its  vorticity equations as
\begin{equation}\label{euler}\left\{\begin{aligned}
&v_{t} +\omega\times v+\nabla(\Pi(\rho)+\f12|v|^{2})= 0,\\
&\omega_{t}-\text{curl}( v\times\omega)=0,\\
& \text{div\,}\omega =0,
\end{aligned}\right.\end{equation}
which is very closed to the  inviscid ideal Magnetohydrodynamics (MHD) equations
$$\left\{\ba\label{iMHD}
&v_{t} +   \omega\times v -b\cdot\ti b +\nabla (p+\f12|v|^2+\f12 |b|^2)= 0,  \\
&b_{t} -\text{curl}(v\times b)= 0, \\
&\Div v=\Div b=0,
\ea\right.$$
where $ b$ describes  the magnetic field.
For this  ideal MHD system, the cross-helicity
$\int_{\Omega} v(x,t)\cdot h(x,t)dx$ is conserved for the smooth solutions. To the knowledge of the authors,  Yu \cite{[Yu]} obtained  cross-helicity conservation criterion of weak solutions for ideal MHD equations based on the condition that
$v\in L^{3}(0,T;B^{\alpha_{1}}_{3,\infty})$ and $b\in L^{3}(0,T;B^{\alpha_{2}}_{3, c(\mathbb{N})})$ with $\alpha_{1}+2\alpha_{2}\geq1$ and $\alpha_{2}\geq\f13$ (see also \cite{[KL],[WZ],[CKS]}).
Inspired by Yu's work \cite{[Yu]}, we consider  the   helicity conservation of weak solutions to the incompressible Euler equations \eqref{Euler} in spaces $B^{\theta}_{p,c(\mathbb{N})}$ and $B^{\theta}_{p,\infty}$. It should be pointed out that this coincides
with the appearance of helicity conservation of weak solutions to the compressible Euler equations \eqref{CEuler} motivated by  the   cross-helicity conservation in the MHD system (see \cite{[Moffatt],[MT]}). The proof of (1) in Theorem \ref{the01}  and  (1)-(2) in Theorem  \ref{the1.1} is very closed to that in \cite{[CCFS],[Yu]}.  The key point is our new observation that
functions  in the  Onsager type spaces $\dot{B}^{1/3}_{p,c(\mathbb{N})}$ can satisfy the   corresponding    Constantin-E-Titi type commutator estimates  in  physical spaces (see Lemma \ref{lem2.3}).
It seems that the argument  presented here is quite general and can be applied in other fluid equations.
Particularly, we shall show that our approach here can be applied in the  2D    surface quasi-geostrophic (SQG) equation  below,
\be\left\{\ba\label{qg}
&\theta_{t} + v\cdot\ti
\theta=0, ~~~~~~~~~~~~~~~~~~~~~~~~~~~~~~~~~~~~~~~\text{in } ~(0,T)\times \Omega,\\
&v=\nabla^\perp \Psi=(-\frac{\partial \Psi}{\partial_{x_2}},\frac{\partial \Psi}{\partial_{x_1}}),~\theta=-(-\Delta )^{\frac{1}{2}} \Psi, ~~~~~\text{in } ~[0,T)\times \Omega,\\
&\theta|_{t=0}=\theta_0,~~~~~~~~~~~~~~~~~~~~~~~~~~~~~~~~~~~~~~~~~~~~~~\text{in } ~\Omega,
\ea\right.\ee
where the unknown scalar function
$\theta(x, t)\colon \mathbb{R}^2\times[0,\infty)\to \mathbb{R}$ stands for  the temperature, $v$ is the velocity field, and  $\Psi=-\int_{\Omega} \frac{\theta(y,t)}{|x-y|}dy$ is the stream function. Here we assume that $\Omega$ is the whole space $\mathbb{R}^2$
or torus $\mathbb{T}^2$ with periodic boundary conditions. The surface quasi-geostrophic equation arises in geophysical fluids and shares many striking similarities with  the 3D Euler equations  (see \cite{[CMT]} and   references
therein).  Zhou  \cite{[Zhou]} studied 
the following 
  general helicity
$$
\int_{\Omega}  \theta \partial_{i}\theta   dx,i=1,2,
$$
 of weak solutions for the  2D surface quasi-geostrophic equation  \eqref{qg}. It is shown that if
$\nabla\theta \in C([0,T];L^{\f43})\cap L^{3}(0,T;B^{\alpha}_{\f32,\infty})$ for $\alpha>1/3$, then the general helicity of weak solutions for 2-D surface quasi-geostrophic equation is conserved in \cite{[Zhou]}.
 In the present paper, we are going to give some new sufficient conditions to guarantee the
conservation of the helicity    for weak solutions to the 2-D surface quasi-geostrophic equation  in Onsager-critical spaces. Our third main result can be stated as follows:
\begin{theorem}\label{the1.2} Let  $\theta$ be a  weak solution to the 2D surface quasi-geostrophic equation  \eqref{qg} in the sense of Definition \ref{qgdefi} and $ \nabla\theta\in C([0,T];L^{\f43}(\Omega))$. Then the helicity  conservation \be\label{ghqg}
\int_{\Omega}   \theta(x,t) \partial_{i}\theta(x,t)   dx =\int_{\Omega}  \theta_0(x) \partial_{i}\theta_0 (x) dx, i=1,2, \ee
 is valid provided that
  $ \nabla\theta \in L^{3} (0,T;\dot{B}^{\f13}_{\f32,c(\mathbb{N})} )\cap C([0,T];L^{\f43}(\Omega)). $
\end{theorem}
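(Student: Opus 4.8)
The plan is to adapt the Constantin--E--Titi mollification scheme to the active-scalar structure of \eqref{qg}, with $\nabla\theta$ playing the role that the vorticity plays for the Euler system; this is the two-dimensional analogue of the argument behind Theorem \ref{the1.1}(3), where the Besov index $\frac{3d}{d+2}$ specializes to $\frac32$ when $d=2$. I first record the kinematics. From $\theta=-(-\Delta)^{1/2}\Psi$ and $v=\nabla^{\perp}\Psi$ one gets $v=-\nabla^{\perp}(-\Delta)^{-1/2}\theta$, so $v$ is a zeroth-order singular-integral (Calder\'{o}n--Zygmund) image of $\theta$, is divergence free, and each $\partial_{i}v_{j}$ is a Riesz-type operator acting on $\nabla\theta$. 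Since $\nabla\theta\in C([0,T];L^{4/3}(\Omega))$, the Sobolev embedding $W^{1,4/3}(\Omega)\hookrightarrow L^{4}(\Omega)$ yields $\theta\in C([0,T];L^{4}(\Omega))$ and $v\in C([0,T];L^{4}(\Omega))$; hence $v\theta\in C([0,T];L^{2}(\Omega))$ and the general helicity $\int_{\Omega}\theta\,\partial_{i}\theta\,dx$ is well defined and continuous in $t$, so it suffices to show it is independent of $t$.

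I then mollify in space, $\theta^{\varepsilon}=\theta\ast\eta_{\varepsilon}$. Since \eqref{qg} holds in the form $\partial_{t}\theta+\nabla\cdot(v\theta)=0$ distributionally, convolution gives the exact identity $\partial_{t}\theta^{\varepsilon}+v\cdot\nabla\theta^{\varepsilon}=-C^{\varepsilon}$ with the transport--mollification commutator $C^{\varepsilon}:=(v\cdot\nabla\theta)^{\varepsilon}-v\cdot\nabla\theta^{\varepsilon}$. Differentiating $\int_{\Omega}\theta^{\varepsilon}\,\partial_{i}\theta^{\varepsilon}\,dx$ in $t$, substituting this evolution, and integrating by parts, the transport and stretching contributions cancel by the divergence-free condition $\nabla\cdot v=0$, leaving only the two commutator terms $\int_{\Omega}C^{\varepsilon}\,\partial_{i}\theta^{\varepsilon}\,dx$ and $\int_{\Omega}\theta^{\varepsilon}\,\partial_{i}C^{\varepsilon}\,dx$. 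The conservation law is therefore reduced to proving that these two terms --- the helicity flux carried by the regularization --- tend to $0$ as $\varepsilon\to0$.

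The heart of the matter is the physical-space commutator estimate of Lemma \ref{lem2.3}. Expanding $C^{\varepsilon}$ through the Constantin--E--Titi increment identity in terms of $\delta_{y}v(x)=v(x-y)-v(x)$ and $\delta_{y}\nabla\theta$, the flux becomes a trilinear expression in the increments of $v$, $\theta$, and $\nabla\theta$; a bare H\"older bound with $\nabla\theta\in L^{3/2}$ does not close, which is exactly why the commutator cancellation must be exploited. Lemma \ref{lem2.3} furnishes the missing gain: the increments of a $\dot{B}^{1/3}_{3/2}$ function satisfy the CET-type bound in physical space, the Riesz-type relation $\theta\mapsto v$ transfers this control to the velocity, and combined with the temporal integrability $L^{3}(0,T)$ this produces the Onsager-critical, $\varepsilon$-scale-invariant estimate for the flux. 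The refinement to the little-$o$ class $\dot{B}^{1/3}_{3/2,c(\mathbb{N})}$ then upgrades the resulting $O(1)$ bound to $o(1)$, so the flux vanishes; passing $\varepsilon\to0$ and using the time continuity established above gives \eqref{ghqg}.

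The step I expect to be the principal obstacle is precisely this commutator estimate, because the velocity is tied to $\theta$ only through the nonlocal operator $-\nabla^{\perp}(-\Delta)^{-1/2}$. One must bound $\delta_{y}v$ by increments of $\theta$ uniformly in $y$, handle a product of three merely $L^{4/3}/L^{4}$ factors that genuinely needs the $\dot{B}^{1/3}$ smoothness rather than integrability alone, and justify each integration by parts at the level of the smooth approximants before removing the mollification. I would therefore isolate a commutator lemma tailored to the pair $(\theta,v)$ and verify that the singular integral relating them preserves the relevant Besov class, so that Lemma \ref{lem2.3} applies to the surface quasi-geostrophic helicity flux essentially verbatim.
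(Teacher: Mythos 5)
Your proposal is correct and follows essentially the same route as the paper: mollify in space, reduce the time derivative of $\int_\Omega\theta^\varepsilon\partial_i\theta^\varepsilon\,dx$ to mollification--transport commutators, transfer the $\dot{B}^{1/3}_{3/2,c(\mathbb{N})}$ regularity from $\nabla\theta$ to $\nabla v$ via the boundedness of the Riesz-type operator $v=-\nabla^{\perp}(-\Delta)^{-1/2}\theta$, and close with the physical-space Constantin--E--Titi estimate of Lemma \ref{lem2.3} (conditions (2)--(3), with $q_1=q_2=\tfrac32$, $d=2$) together with Lemma \ref{lem2.2} and Sobolev embedding to get $o(\varepsilon^{2/3})\cdot o(\varepsilon^{-2/3})=o(1)$. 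The only cosmetic difference is that the paper mollifies the equations for $\theta$ and $\partial_i\theta$ separately and organizes the flux into three commutators $I$, $II$, $III$ with both factors mollified, whereas you keep $v$ unmollified in the transport term; this requires the additional (routine) control of $(v-v^\varepsilon)\cdot\nabla\theta^\varepsilon$ but does not change the argument.
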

 \begin{remark}
According to the Bernstein inequality, one may replace the condition that $ \nabla\theta \in L^{3} (0,T;\dot{B}^{\f13}_{\f32,c(\mathbb{N})} ) $ by $  \theta \in L^{3} (0,T;\dot{B}^{\f43}_{\f32,c(\mathbb{N})} ) $ in this theorem.
\end{remark}
The rest of this paper is organized as follows.
 In Section 2, we present some notations and  auxiliary lemmas which will be used in the present paper. Particularly, in the spirit of \cite{[Yu]},  we shall  show that   the  functions in  Onsager type spaces $\dot{B}^{\f13}_{p,c(\mathbb{N})}$ mean the Constantin-E-Titi type commutator estimates  in  physical spaces, which plays an important role in the follow-up study.
 Section 3 and Section 4 are devoted to  the proof of helicity conservation of weak solutions to the compressible Euler equations and the homogeneous incompressible Euler equations, respectively.  The helicity conservation of weak solutions to the  2-D surface quasi-geostrophic equation  is considered  in  Section 5.
\section{Notations and some auxiliary lemmas} \label{section2}

First, we introduce some notations used in this paper.
 For $p\in [1,\,\infty]$, the notation $L^{p}(0,\,T;X)$ stands for the set of measurable functions $f$ on the interval $(0,\,T)$ with values in $X$ and $\|f\|_{X}$ belonging to $L^{p}(0,\,T)$. The classical Sobolev space $W^{k,p}(\Omega)$ is equipped with the norm $\|f\|_{W^{k,p}(\Omega)}=\sum\limits_{|\alpha| =0}^{k}\|D^{\alpha}f\|_{L^{p}(\Omega)}$.
  $\mathcal{S}$ represents the Schwartz class of rapidly decreasing functions, $\mathcal{S}'$ the
space of tempered distributions, $\mathcal{S}'/\mathcal{P}$ the quotient space of tempered distributions which modulo polynomials.
  We use $\mathcal{F}f$ or $\widehat{f}$ to denote the Fourier transform of a tempered distribution $f$. $a\approx b$ means that $C^{-1}b\leq a\leq Cb$ for some constant $C>1$. For simplicity, we write $$\int_0^t\int_{\Omega} f(t, x)dxds=\int_0^t\int f\ ~~\text{and}~~ \|f\|_{L^p(0,T;X(\Omega) )}=\|f\|_{L^p(0,T;X)}.$$
To define Besov  spaces, we need the following dyadic unity partition
(see e.g. \cite{[BCD]}). Choose two nonnegative radial
functions $\varrho$, $\varphi\in C^{\infty}(\mathbb{R}^{d})$
supported respectively in the ball $\{\xi\in
\mathbb{R}^{d}:|\xi|\leq \frac{3}{4} \}$ and the shell $\{\xi\in
\mathbb{R}^{n}: \frac{3}{4}\leq |\xi|\leq
  \frac{8}{3} \}$ such that
\begin{equation*}
 \varrho(\xi)+\sum_{j\geq 0}\varphi(2^{-j}\xi)=1, \quad
 \forall\xi\in\mathbb{R}^{d}; \qquad
 \sum_{j\in \mathbb{Z}}\varphi(2^{-j}\xi)=1, \quad \forall\xi\neq 0.
\end{equation*}
Then for every $\xi\in\mathbb{R}^{d},$ $\varphi(\xi)=\varrho(\xi/2)-\varrho(\xi)$. Denote $h=\mathcal{F}^{-1} \varphi $ and $\tilde{h}=\mathcal{F}^{-1}\varrho$, then nonhomogeneous dyadic blocks  $\Delta_{j}$ are defined by
$$
\Delta_{j} u:=0 ~~ \text{if} ~~ j \leq-2, ~~ \Delta_{-1} u:=\varrho(D) u =\int_{\mathbb{R}^d}\tilde{h}(y)u(x-y)dy,$$
$$\text{and}~~\Delta_{j} u:=\varphi\left(2^{-j} D\right) u=2^{jd}\int_{\mathbb{R}^d}h(2^{j}y)u(x-y)dy  ~~\text{if}~~ j \geq 0.
$$
The nonhomogeneous low-frequency cut-off operator $S_j$ is defined by
$$
S_{j}u:= \sum_{k\leq j-1}\Delta_{k}u.$$
The homogeneous dyadic blocks $\dot{\Delta}_{j}$ and homogeneous low-frequency cut-off operators $\dot{S}_j$ are  defined  for every $j\in\mathbb{Z}$ by
\begin{equation*}
  \dot{\Delta}_{j}u:= \varphi(2^{-j}D)u=2^{jd}\int_{\mathbb{R}^d}h(2^{j}y)u(x-y)dy,
\end{equation*}
$$ \text { and }~~ \dot{S}_{j}u:=\varrho(2^{-j}D)u=2^{jd}\int_{\mathbb{R}^d}\tilde{h}(2^{j}y)u(x-y)dy.$$
Then for $-\infty <s<\infty$ and $1\leq p,q\leq \infty,$ the homogeneous Besov semi-norm $ \|f\|_{\dot{B}^{s}_{p, q}}$ of $f\in \mathcal{S}'/\mathcal{P}$ is given by
\begin{equation*}
	\begin{aligned}
  \norm{f}_{\dot{B}^{s}_{p, q}}:=\left\{\begin{array}{lll}\left(\sum_{j\in \mathbb{Z}}2^{jqs}\norm{\dot{\Delta}_{j} f} _{L^p}^q\right)^{1/q},~~\text{if}\ q\in [1,\infty),\\
  	\sup_{j\in \mathbb{Z}}2^{js}\norm {\dot{\Delta}_{j}f} _{L^p},~~~~~~~~~\text{if}~q=\infty.\end{array}\right.
\end{aligned}\end{equation*}
Moreover, for $s\in\mathbb{R}$ and $1\leq p,q\leq \infty$, we define the inhomogeneous Besov norm $\norm{f}_{B^s_{p,q}}$ of $f\in \mathcal{S}^{'}$ as
$$\norm{f}_{B^s_{p,q}}=\norm{f}_{{L^p}}+\norm{f}_{\dot{B}^s_{p,q}}.$$
Motivated by \cite{[CCFS]}, we denote $\dot{B}^\alpha _{p,c(\mathbb{N})}$ with $\alpha\in\mathbb{R}$ and $1\leq p \leq \infty$ as the class of all tempered distributions $f$ satisfying
\begin{equation}\label{2.1}
\norm{f}_{\dot{B}^\alpha _{p,\infty}}<\infty~ \text{and}~ 	\lim_{j\rightarrow \infty} 2^{j\alpha}\norm{\dot{\Delta}_j f}_{L^p}=0.
\end{equation}
It is clear that the Besov
spaces $\dot{B}^\alpha_{p,q}$ are included in $\dot{B}^\alpha_{p,c(\mathbb{N})}$ for any $1\leq q< \infty$. For more background on harmonic analysis in the context of fluids, we refer the readers to \cite{chemin}.

Furthermore, we let $\eta_{\varepsilon}:\mathbb{R}^{d}\rightarrow \mathbb{R}$ be a standard mollifier, i.e. $\eta(x)=C_0e^{-\frac{1}{1-|x|^2}}$ for $|x|<1$ and $\eta(x)=0$ for $|x|\geq 1$, where $C_0$ is a constant such that $\int_{\mathbb{R}^d}\eta (x) dx=1$. For $\varepsilon>0$, the rescaled mollifier $\eta_{\varepsilon}(x)=\frac{1}{\varepsilon^d}\eta(\frac{x}{\varepsilon})$, and for  any function $f\in L^1_{loc}(\mathbb{R}^d)$, its mollified version is defined by
$$f^\varepsilon(x)=(f*\eta_{\varepsilon})(x)=\int_{\mathbb{R}^d}f(x-y)\eta_{\varepsilon}(y)dy,\ \ x\in \mathbb{R}^d.$$

Subsequently, we collect some Lemmas which will be used in the present paper.
\begin{lemma}[Bernstein inequality]\label{berinequ}
	For any $b\geq a \geq 1$, there holds that
	\begin{equation}
		\norm{\dot{\Delta}_{j}f}_{L^b}\leq 2^{jd(\frac{1}{a}-\frac{1}{b})}\norm{\dot{\Delta}_{j}f}_{L^a}.
	\end{equation}
\end{lemma}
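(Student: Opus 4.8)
The plan is to exploit the one defining feature of the homogeneous block $\dot{\Delta}_j f=\varphi(2^{-j}D)f$: its Fourier transform $\widehat{\dot{\Delta}_j f}(\xi)=\varphi(2^{-j}\xi)\widehat f(\xi)$ is supported in the dyadic annulus $\mathcal{C}_j=\{\xi\in\R^d:\tfrac34\,2^j\le|\xi|\le\tfrac83\,2^j\}$, inherited from the support of $\varphi$. On a function so frequency-localized, passing from $L^a$ to $L^b$ with $b\ge a$ costs precisely the factor $2^{jd(1/a-1/b)}$, and the mechanism is Young's convolution inequality applied to a reproducing kernel.

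First I would fix, once and for all, a radial cutoff $\tilde\varphi\in C^\infty_c(\R^d)$ supported in a slightly enlarged annulus (say $\{\tfrac12\le|\xi|\le 3\}$) and identically equal to $1$ on the support of $\varphi$, i.e.\ on $\{\tfrac34\le|\xi|\le\tfrac83\}$. Then $\tilde\varphi(2^{-j}\xi)\equiv 1$ on $\mathcal{C}_j\supseteq\mathrm{supp}\,\widehat{\dot{\Delta}_j f}$, so the multiplier reproduces the block exactly:
$$\dot{\Delta}_j f=\tilde\varphi(2^{-j}D)\dot{\Delta}_j f=\psi_j*(\dot{\Delta}_j f),\qquad \psi_j:=\mathcal{F}^{-1}\big[\tilde\varphi(2^{-j}\cdot)\big].$$
By the scaling law of the inverse Fourier transform one has $\psi_j(x)=2^{jd}\psi(2^j x)$, where $\psi:=\mathcal{F}^{-1}\tilde\varphi\in\mathcal{S}$.

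Next I would apply Young's inequality $\norm{\psi_j*g}_{L^b}\le\norm{\psi_j}_{L^r}\norm{g}_{L^a}$ with $g=\dot{\Delta}_j f$ and Young exponent $r$ determined by $1+\tfrac1b=\tfrac1r+\tfrac1a$, i.e.\ $\tfrac1r=1-(\tfrac1a-\tfrac1b)$. The hypothesis $b\ge a\ge1$ is exactly what keeps $r\in[1,\infty]$: since $\tfrac1a-\tfrac1b\ge0$ we get $r\ge1$, and since $a\ge1,\,b\ge a$ we get $\tfrac1a-\tfrac1b\le1$, so $r\le\infty$. The only remaining computation is the norm of the rescaled kernel: a change of variables $y=2^jx$ gives
$$\norm{\psi_j}_{L^r}=2^{jd}\Big(\int_{\R^d}|\psi(2^jx)|^r\,dx\Big)^{1/r}=2^{jd(1-1/r)}\norm{\psi}_{L^r}=2^{jd(1/a-1/b)}\norm{\psi}_{L^r},$$
and $\norm{\psi}_{L^r}<\infty$ because $\psi$ is Schwartz. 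Combining the three displays yields $\norm{\dot{\Delta}_j f}_{L^b}\le C\,2^{jd(1/a-1/b)}\norm{\dot{\Delta}_j f}_{L^a}$ with $C=\norm{\psi}_{L^r}$ depending only on $d$ and the fixed profile $\tilde\varphi$; this is the constant suppressed in the statement, and it disappears at the endpoint $a=b$ (there $r=1$, the factor is $2^0=1$, and one recovers the trivial bound).

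There is no genuine obstacle here, as the lemma is a standard frequency-localization estimate; the statement uses the homogeneous blocks $\dot{\Delta}_j$, but the argument is identical for the inhomogeneous $\Delta_j$ since both carry compact annular frequency support for $j\ge0$. The only points demanding care are (i) that the enlarged cutoff $\tilde\varphi$ genuinely equals $1$ on $\mathrm{supp}\,\varphi$, so that the reproducing identity $\dot{\Delta}_j f=\psi_j*\dot{\Delta}_j f$ is an exact equality rather than an approximation, and (ii) the bookkeeping of the Young exponent $r$, which I have tied directly to the hypothesis $b\ge a\ge1$ above.
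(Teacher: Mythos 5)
Your proof is correct, and it is the canonical argument: the paper states this Bernstein inequality without any proof (it is standard background, cf.\ the reference to Bahouri--Chemin--Danchin cited in Section 2), and your reproducing-kernel identity $\dot{\Delta}_j f=\psi_j*\dot{\Delta}_j f$ followed by Young's inequality and the scaling computation $\|\psi_j\|_{L^r}=2^{jd(1/a-1/b)}\|\psi\|_{L^r}$ is exactly the textbook route, with the exponent bookkeeping handled correctly. The one discrepancy is cosmetic but worth noting: your argument yields the inequality with a multiplicative constant $C=\|\psi\|_{L^r}$ depending on $d$ and the fixed cutoff $\tilde{\varphi}$, whereas the paper's display shows no constant at all; since $\|\psi\|_{L^r}\geq 1$ in general (e.g.\ $\|\psi\|_{L^1}\geq\|\tilde{\varphi}\|_{L^\infty}=1$), the literal constant-free form is not what this proof produces --- the paper is simply suppressing the constant, and the form with $C$ is all that is ever used downstream, as you correctly observe.
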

The following lemma gives a characterization of Besov spaces in terms of finite differences in the same spirit of \cite{[Shvydkoy2009],[Yu],[DR]}.
 \begin{lemma}\label{lem2.1}
Let $\alpha\in (0,1)$ and $p\in [1, \infty]$, then for any $f\in \mathcal{S}^{'}_h$, there holds
\begin{equation}\label{b1}
 f\in {\dot{B}^\alpha_{p,\infty}}\Longleftrightarrow ess\sup_{|y|>0}\frac{ \|f(\cdot-y)-f(\cdot)\|_{L^{p}}}{|y|^\alpha}<\infty ;
 \end{equation}
\begin{equation}\label{b2}
\lim_{j\rightarrow \infty} 2^{j\alpha}\norm{\dot{\Delta}_j f}_{L^p}=0 \Longleftrightarrow \lim\limits_{|y|\rightarrow 0}\frac{ \|f(\cdot-y)-f(\cdot)\|_{L^{p}}}{|y|^\alpha}=0.
\end{equation}
\end{lemma}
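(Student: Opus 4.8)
The plan is to prove both equivalences by passing between the finite difference $\delta_y f:=f(\cdot-y)-f(\cdot)$ and the dyadic blocks $\dot{\Delta}_j f=h_j\ast f$, where $h_j(x)=2^{jd}h(2^jx)$ and $h=\mathcal{F}^{-1}\varphi$. Two elementary facts drive everything. First, since $\varphi$ is supported in an annulus away from the origin, $\int_{\mathbb{R}^d}h\,dx=\widehat h(0)=\varphi(0)=0$; absorbing this vanishing constant I may write
\begin{equation*}
\dot{\Delta}_j f(x)=2^{jd}\!\int_{\mathbb{R}^d} h(2^jz)\,\delta_z f(x)\,dz ,
\end{equation*}
which converts block norms directly into difference norms. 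Second, writing $\dot{\Delta}_j f=\tilde h_j\ast\dot{\Delta}_j f$ with a fattened kernel $\tilde h_j=2^{jd}\tilde h(2^j\cdot)$, $\tilde h=\mathcal{F}^{-1}\tilde\varphi$, $\tilde\varphi\varphi=\varphi$, and using $\delta_y(\tilde h_j\ast\dot{\Delta}_j f)=(\delta_y\tilde h_j)\ast\dot{\Delta}_j f$ together with Young's inequality and $\|\delta_y\tilde h_j\|_{L^1}\le|y|\,\|\nabla\tilde h_j\|_{L^1}=C\,|y|\,2^j$, I obtain the mean value bound $\|\delta_y\dot{\Delta}_j f\|_{L^p}\le C|y|2^j\|\dot{\Delta}_j f\|_{L^p}$, complemented by the crude bound $\|\delta_y\dot{\Delta}_j f\|_{L^p}\le 2\|\dot{\Delta}_j f\|_{L^p}$. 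The hypothesis $f\in\mathcal{S}'_h$ ensures the homogeneous decomposition $f=\sum_{j}\dot{\Delta}_jf$ converges in $\mathcal{S}'$, so that, once the block tails are shown summable, $\delta_y f=\sum_j\delta_y\dot{\Delta}_j f$ in $L^p$ and norms may be added by the triangle inequality.

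For \eqref{b1}, in the direction ``$\Rightarrow$'' I split the series at the scale $2^J\sim|y|^{-1}$. On the high frequencies $2^j\ge|y|^{-1}$ the crude bound together with $\|\dot{\Delta}_j f\|_{L^p}\le 2^{-j\alpha}\|f\|_{\dot{B}^\alpha_{p,\infty}}$ gives the geometric sum $\sum_{2^j\ge|y|^{-1}}2^{-j\alpha}\lesssim|y|^\alpha$ (convergent since $\alpha>0$); on the low frequencies $2^j<|y|^{-1}$ the mean value bound gives $|y|\sum_{2^j<|y|^{-1}}2^{j(1-\alpha)}\lesssim|y|^\alpha$ (convergent since $\alpha<1$). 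Together these yield $\|\delta_y f\|_{L^p}\lesssim|y|^\alpha\|f\|_{\dot{B}^\alpha_{p,\infty}}$. For ``$\Leftarrow$'', I apply Minkowski's inequality to the representation above, insert $\|\delta_z f\|_{L^p}\le M|z|^\alpha$, and change variables $w=2^jz$ to reach $2^{j\alpha}\|\dot{\Delta}_j f\|_{L^p}\le M\int_{\mathbb{R}^d}|h(w)||w|^\alpha\,dw$, which is finite because $h$ is Schwartz and is uniform in $j$.

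For \eqref{b2} I work under the finiteness already furnished by \eqref{b1}, so both sides are controlled by one constant $A$. The direction ``$\Leftarrow$'' is the cleaner one: given $\varepsilon$, choose $\rho$ with $\|\delta_z f\|_{L^p}\le\varepsilon|z|^\alpha$ for $|z|\le\rho$, split the representation integral at $|z|=\rho$, bound the near part by $\varepsilon\int|h(w)||w|^\alpha\,dw$ after rescaling, and bound the far part by $A\int_{|w|>2^j\rho}|h(w)||w|^\alpha\,dw$, whose tail vanishes as $j\to\infty$; hence $\limsup_j 2^{j\alpha}\|\dot{\Delta}_j f\|_{L^p}\lesssim\varepsilon$. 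For ``$\Rightarrow$'' I refine the forward split of \eqref{b1} into three ranges. Fixing $N$ with $2^{j\alpha}\|\dot{\Delta}_j f\|_{L^p}<\varepsilon$ for $j\ge N$, the ranges $2^j\ge|y|^{-1}$ and $2^N\le 2^j<|y|^{-1}$ each contribute $O(\varepsilon|y|^\alpha)$ exactly as before but carrying the small factor $\varepsilon$, while the remaining frozen tail $j<N$ is handled by the mean value bound as $|y|\sum_{j<N}2^{j(1-\alpha)}A=O(|y|)$, which is $o(|y|^\alpha)$ precisely because $\alpha<1$.

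The main obstacle is this last point: securing genuine little-$o$ decay, rather than mere boundedness, in the forward direction of \eqref{b2}. It forces the three-fold decomposition and rests on two structural features — the strict inequality $\alpha<1$, which renders the frozen low-frequency tail negligible against $|y|^\alpha$, and the standing bound $A=\sup_j2^{j\alpha}\|\dot{\Delta}_j f\|_{L^p}<\infty$ supplied by \eqref{b1} — so that the genuinely small blocks ($j\ge N$) dictate the rate. All the remaining work is routine geometric summation and changes of variables.
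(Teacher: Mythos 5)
Your proof is correct and follows essentially the same route as the paper: the forward directions rest on the identical Littlewood--Paley splitting of $f(\cdot-y)-f(\cdot)$ at the scale $2^N\sim|y|^{-1}$ (mean-value/Bernstein bound on low frequencies, size bound on high frequencies, using $0<\alpha<1$ for both geometric sums), and the reverse directions on the same zero-mean representation $\dot{\Delta}_jf(x)=2^{jd}\int h(2^jz)\bigl(f(x-z)-f(x)\bigr)dz$ followed by Minkowski and rescaling. The only difference is presentational: the paper compresses your three-range split and your $|z|=\rho$ dominated-convergence step into a single discrete convolution $(K\ast d)(N)$ with an $\ell^{1}$ kernel and then ``takes limits,'' so your explicit handling of the frozen low-frequency tail and your standing boundedness assumption for \eqref{b2} simply spell out what the paper leaves implicit.
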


\begin{proof}
(1) \textquotedblleft LHS$\Rightarrow$ RHS" in \eqref{b1}. First, by the mean value theorem, the Bernstein inequality and the  Minkowski inequality, we have for any tempered distribution $f$ in $\mathbb{R}^d$,
\begin{equation}\label{2.2}
\|f(\cdot-y)-f(\cdot)\|_{L^{p}}\leq C\left(\sum_{j\leq N}2^{j}|y|\|\dot{\Delta}_{j}f\|_{L^{p}}+\sum_{j>N}\|\dot{\Delta}_{j}f\|_{L^{p}}\right).
\end{equation}
Before going further, just as \cite{[CCFS]}, we set the following localized kernel
$$K(j)=\left\{\begin{aligned}
	&2^{j\alpha},~~~~~~~~\text{if}~j\leq0,\\
	& 2^{-(1-\alpha)j},~\text{if}~j>0,
\end{aligned}\right.
$$
and denote $d_{j}=2^{j\alpha}\|\dot{\Delta}_{j}f\|_{L^{p}}$.

As a consequence, using the mean value theorem, the Bernstein inequality and the  Minkowski inequality again, we rewrite \eqref{2.2} as
$$\ba
&\|f(\cdot-y)-f(\cdot)\|_{L^{p}}\\
\leq&C\left( 2^{N(1-\alpha)}|y|\sum_{j\leq N}2^{-(N-j)(1-\alpha)}2^{j\alpha}\|\dot{\Delta}_{j}f\|_{L^{p}}+2^{-\alpha N}\sum_{j>N} 2^{(N-j)\alpha} 2^{j\alpha}\|\dot{\Delta}_{j}f\|_{L^{p}}\right)\\
\leq&C \left(2^{N(1-\alpha)}|y| +2^{-\alpha N}  \right)(K\ast d_j)(N).\ea$$
Thanks to the localized kernel $K(\cdot)\in l^{1}$, by choosing $N$ such that  $$2^{N (1-\alpha)}|y|\approx  2^{-N\alpha},$$
we arrive at
\begin{equation}\label{b4}
\|f(\cdot-y)-f(\cdot)\|_{L^{p}}\leq C|y|^\alpha (K\ast d_j)(N)\leq C|y|^\alpha \sup_{j\in \mathbb{Z}} d_j.
\end{equation}
This implies that $ess\sup_{|y|>0}\frac{ \|f(\cdot-y)-f(\cdot)\|_{L^{p}}}{|y|^\alpha}\leq C\|f\|_{\dot{B}^\alpha_{p,\infty}}$.

Now we will prove the reverse inequality \textquotedblleft LHS $\Longleftarrow$ RHS" in \eqref{b1}. As the mean value of the function $h$ is $0$, we can write
\begin{equation}
	\begin{aligned}
		\dot{\Delta}_{j} f(x)&=2^{jd}\int h(2^j y)f(x-y)dy\\
		&=2^{jd}\int h(2^j y) \left(f(x-y)-f(x)\right) dy,
	\end{aligned}
\end{equation}
which together with Minkowski inequality yields that
\begin{equation}\label{b5}
	\begin{aligned}
		2^{j\alpha}\|\dot{\Delta}_j f\|_{L^p}&\leq 2^{jd}\int 2^{j\alpha}|h(2^j y)|\|f(\cdot-y)-f(\cdot)\|_{L^p} dy\\
		&\leq ess\sup_{y\in \mathbb{R}^d} \frac{\|f(\cdot-y)-f(\cdot)\|_{L^p} }{|y|^\alpha}\int 2^{j(\alpha+d)}|y|^\alpha|h(2^j y)|dy\\
		&\leq C\,ess\sup_{y\in \mathbb{R}^d} \frac{\|f(\cdot-y)-f(\cdot)\|_{L^p} }{|y|^\alpha}.
	\end{aligned}
\end{equation}
Then we conclude the result \eqref{b1}.

(2) The proof of \eqref{b2} is similar to \eqref{b1}. Due to \eqref{b4} and \eqref{b5}, we immediately obtain the result \eqref{b2} after taking the limits.
We complete the proof of this lemma.
\end{proof}
As a corollary of Lemma \ref{lem2.1}, we have the following properties.
\begin{coro}\label{coro}
Let  $0<\alpha, \beta<1$ and  $1\leq q_1,q_2\leq \infty$. Assume that $f\in  \dot{B}^{\alpha}_{q_{1},\infty} $ and $g\in \dot{B}^{\beta}_{q_{2},c(\mathbb{N})} $, then we have
\begin{equation}\label{b5-1}
	\begin{aligned}
		&f\in \dot{B}^\alpha_{q_1,\infty}\Leftrightarrow ess\sup_{|y|>0}\frac{ \|f(\cdot-y)-f(\cdot)\|_{L^{q_1}}}{|y|^\alpha}<\infty,\\
		& g\in \dot{B}^{\beta}_{q_{2},c(\mathbb{N})} \Leftrightarrow  ess\sup_{|y|>0}\frac{ \|g(\cdot-y)-g(\cdot)\|_{L^{q_2}}}{|y|^\beta}<\infty~\text{and}~\lim\limits_{|y|\rightarrow 0}\frac{ \|g(\cdot-y)-g(\cdot)\|_{L^{q_2}}}{|y|^\beta}=0.
	\end{aligned}
\end{equation}
Moreover, there hold
	\begin{equation}\label{b5-2}\begin{aligned}
	&\|f(\cdot-y)-f(\cdot)\|_{L^{q_{1}}} =\text{O}(|y|^{\alpha})\norm{f}_{\dot{B}^\alpha_{q_1,\infty}},\\
	&\|g(\cdot-y)-g(\cdot)\|_{L^{q_{2}}} =\text{o}(|y|^{\beta})\norm{g}_{\dot{B}^\beta_{q_2,c(\mathbb{N})}}.
\end{aligned}\end{equation}
\end{coro}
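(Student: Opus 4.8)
The plan is to obtain every assertion of the corollary by specializing and recombining the two equivalences of Lemma \ref{lem2.1} together with the quantitative bound \eqref{b4} established in its proof; no fresh harmonic analysis is needed, so the work is bookkeeping plus one dominated-convergence argument.

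First I would dispose of the statements for $f$. The first line of \eqref{b5-1} is precisely \eqref{b1} read at $p=q_1$ and exponent $\alpha$, and the hypotheses $0<\alpha<1$, $1\le q_1\le\infty$ are exactly those of Lemma \ref{lem2.1}. The big-$\text{O}$ bound in the first line of \eqref{b5-2} is then immediate from \eqref{b4}: that display already gives $\norm{f(\cdot-y)-f(\cdot)}_{L^{q_1}}\le C|y|^\alpha\sup_{j}d_j$ with $d_j=2^{j\alpha}\norm{\dot\Delta_j f}_{L^{q_1}}$, and $\sup_j d_j=\norm{f}_{\dot{B}^\alpha_{q_1,\infty}}$ by definition of the homogeneous Besov seminorm, whence the quantity is $\text{O}(|y|^\alpha)\norm{f}_{\dot{B}^\alpha_{q_1,\infty}}$.

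Next I would handle $g$ by unfolding the definition \eqref{2.1} of the class $\dot{B}^\beta_{q_2,c(\mathbb{N})}$: membership means simultaneously $\norm{g}_{\dot{B}^\beta_{q_2,\infty}}<\infty$ and $\lim_{j\to\infty}2^{j\beta}\norm{\dot\Delta_j g}_{L^{q_2}}=0$. Applying \eqref{b1} (with $p=q_2$, exponent $\beta$) converts the first condition into finiteness of the difference-quotient supremum over $|y|>0$, while \eqref{b2} converts the second into its vanishing as $|y|\to0$; conjoining the two equivalences yields exactly the second line of \eqref{b5-1}.

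The only step needing care is the little-$\text{o}$ bound in the second line of \eqref{b5-2}. Here I would reuse \eqref{b4} in the form $\norm{g(\cdot-y)-g(\cdot)}_{L^{q_2}}\le C|y|^\beta(K\ast d_j)(N)$, now with $d_j=2^{j\beta}\norm{\dot\Delta_j g}_{L^{q_2}}$ and $N=N(|y|)$ fixed by $2^{N(1-\beta)}|y|\approx 2^{-N\beta}$, so that $N\to\infty$ as $|y|\to0$. The main point is that $(K\ast d_j)(N)\to0$ as $N\to\infty$, and this is where the $c(\mathbb{N})$ hypothesis enters through $d_j\to0$: writing $(K\ast d_j)(N)=\sum_m K(m)\,d_{N-m}$ with $K\in l^1$ and $(d_j)$ bounded (by $\dot{B}^\beta_{q_2,\infty}$ membership), for each fixed $m$ one has $d_{N-m}\to0$ while $|K(m)d_{N-m}|\le\norm{(d_j)}_{l^\infty}|K(m)|$ is summable, so dominated convergence for series gives $(K\ast d_j)(N)\to0$. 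This yields $\norm{g(\cdot-y)-g(\cdot)}_{L^{q_2}}=\text{o}(|y|^\beta)\norm{g}_{\dot{B}^\beta_{q_2,c(\mathbb{N})}}$, completing \eqref{b5-2}. I expect this dominated-convergence passage to be the only genuine obstacle, but it is exactly the mechanism already underlying \eqref{b2}, so in practice the corollary follows by packaging the conclusions of Lemma \ref{lem2.1}.
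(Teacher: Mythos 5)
Your proposal is correct and follows essentially the same route as the paper, which states the corollary as an immediate repackaging of Lemma \ref{lem2.1}: the equivalences in \eqref{b5-1} come from reading \eqref{b1} and \eqref{b2} at the relevant exponents, and the bounds \eqref{b5-2} come from the quantitative estimate \eqref{b4}. Your dominated-convergence argument showing $(K\ast d_j)(N)\to 0$ as $N\to\infty$ (using $d_j\to 0$ together with boundedness from $\dot{B}^{\beta}_{q_2,\infty}$ membership and $K\in l^1$) is exactly the mechanism the paper invokes implicitly when it says \eqref{b2} follows from \eqref{b4} ``after taking the limits,'' so you have merely made the paper's tacit step explicit.
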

Combining the properties of mollifier with Corollary \ref{coro}, we derive the following Lemma.
\begin{lemma}\label{lem2.2}
Let $\alpha, \beta\in (0,1)$,  $ p,q\in [1,\infty]$,  and $k\in \mathbb{N}^+$. Assume that  $f\in L^p(0,T;\dot{B}^\alpha_{q,\infty})$, $g\in L^p(0,T;\dot{B}^\beta_{q,c(\mathbb{N})})$,  then there hold
 \begin{enumerate}[(1)]
 \item $ \|f^{\varepsilon} -f \|_{L^{p}(0,T;L^{q})}\leq \text{O}(\varepsilon^{\alpha})\|f\|_{L^p(0,T;\dot{B}^\alpha_{q,\infty})}$;
   \item   $ \|\nabla^{k}f^{\varepsilon}  \|_{L^{p}(0,T;L^{q})}\leq \text{O}(\varepsilon^{\alpha-k})\|f\|_{L^p(0,T;\dot{B}^\alpha_{q,\infty})}$;
       \item $ \|g^{\varepsilon} -g \|_{L^{p}(0,T;L^{q})}\leq \text{o}(\varepsilon^{\beta})\|g\|_{L^p(0,T;\dot{B}^\beta_{q,c(\mathbb{N})})}$;
   \item   $ \|\nabla^{k}g^{\varepsilon}  \|_{L^{p}(0,T;L^{q})}\leq \text{o}(\varepsilon^{\beta-k})\|g\|_{L^p(0,T;\dot{B}^\beta_{q,c(\mathbb{N})})}.$
 \end{enumerate}
\end{lemma}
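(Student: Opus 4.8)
The plan is to reduce all four bounds to the finite-difference characterizations recorded in Corollary \ref{coro}, namely $\|f(\cdot-y)-f(\cdot)\|_{L^q}=\mathrm{O}(|y|^\alpha)\|f\|_{\dot{B}^\alpha_{q,\infty}}$ and $\|g(\cdot-y)-g(\cdot)\|_{L^q}=\mathrm{o}(|y|^\beta)\|g\|_{\dot{B}^\beta_{q,c(\mathbb{N})}}$, applied for almost every fixed time $t$. The starting point is two integral identities. For the difference I would write $f^\varepsilon(x)-f(x)=\int_{\mathbb{R}^d}\eta_\varepsilon(y)\big(f(x-y)-f(x)\big)\,dy$, whose integrand is supported in $|y|<\varepsilon$. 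For the derivative I use $\nabla^k f^\varepsilon=f*\nabla^k\eta_\varepsilon$ together with the fact that $\int_{\mathbb{R}^d}\nabla^k\eta_\varepsilon(y)\,dy=0$ for $k\geq1$ (the integral of a derivative of a compactly supported function vanishes), which lets me insert the constant $f(x)$ and write $\nabla^k f^\varepsilon(x)=\int_{\mathbb{R}^d}\big(f(x-y)-f(x)\big)\nabla^k\eta_\varepsilon(y)\,dy$. In both cases Minkowski's integral inequality in $x$ moves the $L^q$ norm inside the $y$-integral.

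For the big-$\mathrm{O}$ claims (1) and (2), I would insert the bound $\|f(t,\cdot-y)-f(t,\cdot)\|_{L^q}\leq C|y|^\alpha\|f(t)\|_{\dot{B}^\alpha_{q,\infty}}$. Since $\eta_\varepsilon$ is supported in $|y|<\varepsilon$, claim (1) follows from $\int\eta_\varepsilon(y)|y|^\alpha\,dy\leq\varepsilon^\alpha\int\eta_\varepsilon=\varepsilon^\alpha$. For (2) I use the scaling $\nabla^k\eta_\varepsilon(y)=\varepsilon^{-d-k}(\nabla^k\eta)(y/\varepsilon)$ and the change of variables $y=\varepsilon z$ to get $\int|\nabla^k\eta_\varepsilon(y)|\,|y|^\alpha\,dy=\varepsilon^{\alpha-k}\int|(\nabla^k\eta)(z)|\,|z|^\alpha\,dz$, a finite constant. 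These give pointwise-in-$t$ estimates with constants uniform in $t$ and $\varepsilon$; raising to the $p$-th power and integrating over $(0,T)$ yields (1) and (2).

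For the little-$\mathrm{o}$ claims (3) and (4), after the substitution $y=\varepsilon z$ the scaled quotients become, for (3),
\[
\varepsilon^{-\beta}\|g^\varepsilon(t)-g(t)\|_{L^q}\leq\int_{|z|<1}\eta(z)\,|z|^\beta\,\frac{\|g(t,\cdot-\varepsilon z)-g(t,\cdot)\|_{L^q}}{|\varepsilon z|^\beta}\,dz,
\]
and the analogue for (4) with $|(\nabla^k\eta)(z)|$ in place of $\eta(z)$ and exponent $\beta-k$. By the little-$\mathrm{o}$ property the integrand tends to $0$ pointwise in $z$ as $\varepsilon\to0$, while Corollary \ref{coro} bounds the quotient uniformly, so the integrand is dominated by $C\eta(z)|z|^\beta\|g(t)\|_{\dot{B}^\beta_{q,\infty}}$, which is integrable in $z$; dominated convergence then gives $\varepsilon^{-\beta}\|g^\varepsilon(t)-g(t)\|_{L^q}\to0$ for a.e. $t$. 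To upgrade this to the $L^p(0,T;L^q)$ statement, the same computation supplies the $\varepsilon$-independent majorant $\varepsilon^{-\beta}\|g^\varepsilon(t)-g(t)\|_{L^q}\leq C\|g(t)\|_{\dot{B}^\beta_{q,c(\mathbb{N})}}\in L^p(0,T)$, so a second dominated-convergence argument in $t$ forces the $L^p$ quotient to $0$, which is exactly (3); claim (4) is identical with $\nabla^k\eta$ and exponent $\beta-k$.

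The main obstacle is propagating the little-$\mathrm{o}$ smallness through both the spatial convolution and the time integral simultaneously. This is precisely where the embedding $\dot{B}^\beta_{q,c(\mathbb{N})}\subset\dot{B}^\beta_{q,\infty}$ is indispensable: it furnishes the integrable, $\varepsilon$-independent dominating functions (in $z$ and in $t$, respectively) required for the two dominated-convergence steps, without which one could only recover the big-$\mathrm{O}$ rate.
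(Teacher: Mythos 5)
Your proof is correct and follows essentially the same route as the paper's: the same two integral identities (inserting $f(x)$ via $\int\eta_\varepsilon=1$ for the difference and via $\int\nabla^k\eta_\varepsilon=0$ for the derivative), Minkowski's inequality, and the finite-difference characterization from Corollary \ref{coro}. You are in fact more careful than the paper on parts (3)--(4), which it dispatches with ``exactly as the above derivation'': your two dominated-convergence steps are precisely what is needed to push the little-$o$ through the spatial convolution and then the time integral, with the one caveat that the second step requires $p<\infty$ (for $p=\infty$ pointwise-a.e.\ convergence in $t$ does not yield smallness of the essential supremum, a limitation the paper's statement shares).
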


\begin{proof}
(1) Since $\int_{\mathbb{R}^{d}}\eta_{\varepsilon}(y)dy=1$, we deduce from direct calculations that
$$\ba
f^{\varepsilon}(x)-f(x)
=&\int f(x-y)\eta_{\varepsilon}(y)dy
-f(x)\int\eta_{\varepsilon}(y)dy\\
=&\int[f(x-y)-f(x)]\eta_{\varepsilon}(y)dy.
\ea$$
According to the Minkowski inequality and Corollary \ref{coro}, we see that
$$ \ba
\|f^{\varepsilon} -f \|_{L^{p}(0,T;L^{q})}
\leq& C\int_{B(0,\varepsilon)} \| f(x-y)-f(x)\|_{L^{p}(0,T;L^{q})} \eta_{\varepsilon}(y)dy\\
\leq& \int_{B(0,\varepsilon)}  O(|y|^{\alpha})\|f\|_{L^p(0,T;\dot{B}^\alpha_{q,\infty})} \eta_{\varepsilon}(y)dy\\
\leq& \int_{B(0,\varepsilon)}  O(\varepsilon^{\alpha})\|f\|_{L^p(0,T;\dot{B}^\alpha_{q,\infty})} \eta_{\varepsilon}(y)dy\\
\leq& O(\varepsilon^{\alpha})\|f\|_{L^p(0,T;\dot{B}^\alpha_{q,\infty})},
\ea$$
which concludes (1).

(2) Some straightforward computations yield that
$$\ba
\nabla^k f^\varepsilon(x)=\nabla^k(f\ast\eta_{\varepsilon})(x)=&\int_{B(0,\varepsilon)}\nabla^k _{x}f(x-y)\eta_{\varepsilon}(y)dy \\
=&(-1)^k\int_{B(0,\varepsilon)}\nabla^k _{y}f(x-y)\eta_{\varepsilon}(y)dy\\
=&{\varepsilon}^{-k}\int_{B(0,\varepsilon)} f(x-y)\nabla^k \eta_{\varepsilon}(y)dy.
\ea$$
Using the fact $\int_{B(0,\varepsilon)}\nabla^k\eta (y) dy=0$, we infer that
\be
{\varepsilon}^{-k}\int \nabla^k  \eta_{\varepsilon}(y)dy=0.
\ee
Hence, we arrive at
$$
\nabla f^\varepsilon(x)={\varepsilon}^{-k}\int_{B(0,\varepsilon)}[f(x-y)-f(x)]\nabla^k \eta_{\varepsilon}(y)dy.
$$
Then making use of the  Minkowski inequality and Corollary \ref{coro} once again,
we know that
 $$\ba
 \|\nabla^{k}f^{\varepsilon}  \|_{L^{p}(0,T;L^{q})}\leq & C{\varepsilon}^{-k}\int_{B(0,\varepsilon)}\|f(x-y)-f(x)\|_{L^{p}(0,T;L^{q})}|\nabla^k \eta_{\varepsilon}(y)|dy\\
 \leq & {\varepsilon}^{-k}\int_{B(0,\varepsilon)}O(|y|^{\alpha})\|f\|_{L^p(0,T;\dot{B}^\alpha_{q,\infty})} |\nabla^k  \eta_{\varepsilon}(y)|dy\\
\leq &  {\varepsilon}^{-k}\int_{B(0,\varepsilon)} O(\varepsilon^{\alpha})\|f\|_{L^p(0,T;\dot{B}^\alpha_{q,\infty})} |\nabla^k \eta_{\varepsilon}(y)|dy\\
\leq & O(\varepsilon^{\alpha-k})\|f\|_{L^p(0,T;\dot{B}^\alpha_{q,\infty})}.
 \ea$$
 This verifies the second part of this lemma. Exactly as the above derivation, we can finish the proof of the rest part of this lemma.
\end{proof}
\begin{lemma}\label{lem2.5}
	Let $\alpha\in (0,1)$ and $p\in [1,\infty]$, then there hold
		\begin{equation}\label{b5-3}
		\|fg\|_{{\dot{B}^\alpha_{p,\infty}}}\leq C\left(\|f\|_{L^\infty}\|g\|_{{\dot{B}^\alpha_{p,\infty}}}+\|f\|_{{\dot{B}^\alpha_{\infty,\infty}}}\|g\|_{L^p}\right),~ \text{for any}~ f\in B^\alpha_{\infty,\infty},g\in B^\alpha_{p,\infty},
	\end{equation}
and
	\begin{equation}
		\|fg\|_{{\dot{B}^\alpha_{p,c(\mathbb{N})}}}\leq C\left(\|f\|_{L^\infty}\|g\|_{{\dot{B}^\alpha_{p,c(\mathbb{N})}}}+\|f\|_{{\dot{B}^\alpha_{\infty,c(\mathbb{N})}}}\|g\|_{L^p}\right), \text{for any} ~ f\in B^\alpha_{\infty,c(\mathbb{N})},g\in B^\alpha_{p,c(\mathbb{N})}.
	\end{equation}

\end{lemma}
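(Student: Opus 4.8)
The plan is to bypass Bony's paraproduct calculus and instead push everything through the finite--difference description of Besov spaces recorded in Lemma~\ref{lem2.1} and Corollary~\ref{coro}. Everything hinges on the Leibniz-type identity for the translation $\tau_y h:=h(\cdot-y)$,
\begin{equation*}
(fg)(\cdot-y)-(fg)(\cdot)=f(\cdot-y)\bigl[g(\cdot-y)-g(\cdot)\bigr]+g(\cdot)\bigl[f(\cdot-y)-f(\cdot)\bigr].
\end{equation*}
The bracketing is engineered so that the first summand pairs the bounded factor $f$ against a finite difference of $g$, while the second pairs the $L^p$ factor $g$ against a finite difference of $f$; this is precisely the split that reproduces the two terms on the right-hand side of the asserted estimates.

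First I would take the $L^p$ norm in $x$ and apply H\"older's inequality summand by summand, using the translation invariance of the $L^\infty$ and $L^p$ norms, to get
\begin{equation*}
\norm{(fg)(\cdot-y)-(fg)(\cdot)}_{L^p}\leq \norm{f}_{L^\infty}\norm{g(\cdot-y)-g(\cdot)}_{L^p}+\norm{g}_{L^p}\norm{f(\cdot-y)-f(\cdot)}_{L^\infty}.
\end{equation*}
For the first inequality \eqref{b5-3} I would insert the $\mathrm{O}(|y|^\alpha)$ bounds from \eqref{b5-2} for the finite differences of $g$ and $f$, which gives $\norm{(fg)(\cdot-y)-(fg)(\cdot)}_{L^p}=\mathrm{O}(|y|^\alpha)\bigl(\norm{f}_{L^\infty}\norm{g}_{\dot{B}^\alpha_{p,\infty}}+\norm{f}_{\dot{B}^\alpha_{\infty,\infty}}\norm{g}_{L^p}\bigr)$. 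Dividing by $|y|^\alpha$, taking the essential supremum over $|y|>0$, and invoking the equivalence \eqref{b1} (packaged in \eqref{b5-1}) then identifies the left side with $\norm{fg}_{\dot{B}^\alpha_{p,\infty}}$, which is exactly \eqref{b5-3}.

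For the $c(\mathbb{N})$ version the same H\"older bound is used, but now I would invoke the vanishing of the difference quotients: by the characterization \eqref{b2} (equivalently the second line of \eqref{b5-1}), the hypotheses $g\in\dot{B}^\alpha_{p,c(\mathbb{N})}$ and $f\in\dot{B}^\alpha_{\infty,c(\mathbb{N})}$ force $\norm{g(\cdot-y)-g(\cdot)}_{L^p}=\mathrm{o}(|y|^\alpha)$ and $\norm{f(\cdot-y)-f(\cdot)}_{L^\infty}=\mathrm{o}(|y|^\alpha)$. Since $\norm{f}_{L^\infty}$ and $\norm{g}_{L^p}$ are fixed finite constants, the H\"older bound gives $\norm{(fg)(\cdot-y)-(fg)(\cdot)}_{L^p}=\mathrm{o}(|y|^\alpha)$, which by \eqref{b2} is exactly the defining vanishing condition for $fg\in\dot{B}^\alpha_{p,c(\mathbb{N})}$. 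The accompanying norm bound is the one already produced in the previous paragraph, since on this subspace the $c(\mathbb{N})$ norm coincides with the $\dot{B}^\alpha_{p,\infty}$ semi-norm.

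I expect the difficulties to be bookkeeping rather than structural. One point worth checking is that $fg$ genuinely belongs to the ambient space $\mathcal{S}'_h$ on which Lemma~\ref{lem2.1} operates; this is immediate, since $f\in L^\infty$ and $g\in L^p$ yield $fg\in L^p$, a bona fide tempered distribution with controlled translates. The constant $C$ in \eqref{b5-3} enters only through the two-sided equivalence of Lemma~\ref{lem2.1}, not through the algebra, so no sharpness tracking is needed. The sole load-bearing idea is the choice of Leibniz split above, which routes the lower-integrability factor into each term in exactly the pattern dictated by the right-hand side of the claimed product rule.
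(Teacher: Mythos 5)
Your proposal is correct and follows essentially the same route as the paper: the identical Leibniz-type splitting of the translated product, followed by H\"older/Minkowski and the finite-difference characterization of $\dot{B}^\alpha_{p,\infty}$ and $\dot{B}^\alpha_{p,c(\mathbb{N})}$ from Lemma \ref{lem2.1} and Corollary \ref{coro}, with the limit $|y|\to 0$ handling the $c(\mathbb{N})$ case. No substantive differences from the paper's argument.
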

\begin{proof} Using the triangle inequality ,we have
	\begin{equation}
		\begin{aligned}
	&	|f(x-y)g(x-y)-f(x)g(x)|\\
		\leq & {|f(x-y)\Big(g(x-y)-g(x)\Big)|+|\Big(f(x-y)-f(x)\Big)g(x)|},
		\end{aligned}
	\end{equation}
	which together with the Minkowski inequality implies that
	\begin{equation}\label{b5-4}
		\begin{aligned}
	&\frac{\|f(\cdot-y)g(\cdot-y)-fg(\cdot)\|_{L^p}}	{|y|^\alpha}\\
\leq	& C\left(\frac{\|f(\cdot-y)\Big(g(\cdot-y)-g(\cdot)\Big)\|_{L^p}}{|y|^\alpha}+\frac{\|\Big(f(\cdot-y)-f(\cdot)\Big)g(\cdot)\|_{L^p}}{|y|^\alpha}\right)\\
\leq &C\left(\|f\|_{L^\infty}\frac{\|g(\cdot-y)-g(\cdot)\|_{L^p}}{|y|^\alpha}+\frac{\|f(\cdot-y)-f(\cdot)\|_{L^\infty}}{|y|^\alpha}\|g\|_{L^p}\right)\\
\leq &C \left(\|f\|_{L^\infty}\|g\|_{{\dot{B}^\alpha_{p,\infty}}}+\|f\|_{{\dot{B}^\alpha_{\infty,\infty}}}\|g\|_{L^p}\right),
		\end{aligned}
	\end{equation}
where we have used the  Corollary \ref{coro}.
Moreover, taking the limits on \eqref{b5-4} and using the Corollary \ref{coro} again, we have
\begin{equation}
	\begin{aligned}
		&\lim\limits_{|y|\rightarrow0}\frac{\|f(\cdot-y)g(\cdot-y)-fg(\cdot)\|_{L^p}}	{|y|^\alpha}\\
		&\leq C\left(\|f\|_{L^\infty}	\lim\limits_{|y|\rightarrow0}\frac{\|g(\cdot-y)-g(\cdot)\|_{L^p}}{|y|^\alpha}+	\lim\limits_{|y|\rightarrow0}\frac{\|f(\cdot-y)-f(\cdot)\|_{L^\infty}}{|y|^\alpha}\|g\|_{L^p}\right)\\
		&\leq C\left(\|f\|_{L^\infty}\|g\|_{{\dot{B}^\alpha_{p,c(\mathbb{N})}}}+\|f\|_{{\dot{B}^\alpha_{\infty,c(\mathbb{N})}}}\|g\|_{L^p}\right).
	\end{aligned}
\end{equation}
\end{proof}
Next, we will state a new Constantin-E-Titi type commutator estimate as follows.
\begin{lemma}	\label{lem2.3}
	Assume that $0<\alpha,\beta<1$, $1\leq p,q,p_{1},p_{2}\leq\infty$ and $\frac{1}{p}=\frac{1}{p_1}+\frac{1}{p_2}$.
Then, there holds
	\begin{align} \label{cet}
		\|(fg)^{\varepsilon}- f^{\varepsilon}g^{\varepsilon}\|_{L^p(0,T;L^q)} \leq  \text{o}(\varepsilon^{\alpha+\beta}),	
	\end{align}
provided that one of the following three conditions is satisfied,
\begin{enumerate}[(1)]
 \item  $f\in L^{p_1}(0,T;\dot{B}^{\alpha}_{q_{1},c(\mathbb{N})} )$, $g\in L^{p_2}(0,T;\dot{B}^{\beta}_{q_{2},\infty} )$, $1\leq q_{1},q_{2}\leq\infty$, $\frac{1}{q}=\frac{1}{q_1}+\frac{1}{q_2}$;
  \item  $\nabla f\in   L^{p_1}(0,T;\dot{B}^{\alpha}_{q_{1},c(\mathbb{N})} )$, $\nabla g\in L^{p_2}(0,T;\dot{B}^{\beta}_{q_{2},\infty} )$,  $\f{2}{d}+\f1q=\frac{1}{q_{1}}+\frac{1}{q_{2}}$, $1\leq q_{1},q_{2}<d$;
  \item  $  f\in   L^{p_1}(0,T;\dot{B}^{\alpha}_{q_{1},c(\mathbb{N})} )$, $\nabla g\in L^{p_2}(0,T;\dot{B}^{\beta}_{q_{2},\infty} )$,  $\f{1}{d}+\f1q=\frac{1}{q_{1}}+\frac{1}{q_{2}}$, $1\leq q_{2}<d$,  $1\leq q_{1}\leq\infty$.
 \end{enumerate}\end{lemma}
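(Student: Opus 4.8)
The plan is to reduce all three cases to the classical Constantin--E--Titi commutator identity. Writing $\delta_y h(x):=h(x-y)-h(x)$ for a generic function $h$, a direct expansion of the mollifications (using $\int_{\mathbb{R}^d}\eta_{\varepsilon}=1$) gives the pointwise identity
\be
(fg)^{\varepsilon}-f^{\varepsilon}g^{\varepsilon}=\int_{\mathbb{R}^d}\eta_{\varepsilon}(y)\,\delta_yf(x)\,\delta_yg(x)\,dy-(f^{\varepsilon}-f)(g^{\varepsilon}-g).
\ee
Thus it suffices to bound each of the two pieces on the right in $L^p(0,T;L^q)$ by $\text{o}(\varepsilon^{\alpha+\beta})$.

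For condition (1), I would estimate the second piece first: by H\"older in space and time with $\f1q=\f1{q_1}+\f1{q_2}$ and $\f1p=\f1{p_1}+\f1{p_2}$,
\be
\|(f^{\varepsilon}-f)(g^{\varepsilon}-g)\|_{L^p(0,T;L^q)}\le\|f^{\varepsilon}-f\|_{L^{p_1}(0,T;L^{q_1})}\,\|g^{\varepsilon}-g\|_{L^{p_2}(0,T;L^{q_2})},
\ee
and Lemma \ref{lem2.2}(3),(1) control these by $\text{o}(\varepsilon^{\alpha})$ and $\text{O}(\varepsilon^{\beta})$ respectively, whence $\text{o}(\varepsilon^{\alpha+\beta})$. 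For the convolution piece, Minkowski's inequality moves the $L^p(0,T;L^q)$ norm inside the $y$-integral, and H\"older then splits $\|\delta_yf\,\delta_yg\|_{L^p(0,T;L^q)}\le\|\delta_yf\|_{L^{p_1}(0,T;L^{q_1})}\|\delta_yg\|_{L^{p_2}(0,T;L^{q_2})}$. The time-integrated form of Corollary \ref{coro} gives $\|\delta_yf\|_{L^{p_1}(0,T;L^{q_1})}=\text{o}(|y|^{\alpha})$ (because $f$ lies in the $c(\mathbb{N})$ class) and $\|\delta_yg\|_{L^{p_2}(0,T;L^{q_2})}=\text{O}(|y|^{\beta})$, so the product is $\text{o}(|y|^{\alpha+\beta})$; since $\eta_\varepsilon$ is supported in $\{|y|\le\varepsilon\}$, integrating yields $\text{o}(\varepsilon^{\alpha+\beta})$.

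The cleanest way to handle conditions (2) and (3) is to absorb the gradients into the integrability index via Bernstein's inequality and thereby reduce to (1). If $\nabla f\in \dot{B}^{\alpha}_{q_1,c(\mathbb{N})}$ with $q_1<d$, then combining the frequency-localization estimate $\|\dot\Delta_jf\|_{L^{q_1}}\lesssim 2^{-j}\|\dot\Delta_j\nabla f\|_{L^{q_1}}$ with Lemma \ref{berinequ} (raising the exponent from $q_1$ to $q_1^{*}$ with $\f1{q_1^{*}}=\f1{q_1}-\f1d$) gives $2^{j\alpha}\|\dot\Delta_jf\|_{L^{q_1^{*}}}\lesssim 2^{j\alpha}\|\dot\Delta_j\nabla f\|_{L^{q_1}}$, so that $f\in \dot{B}^{\alpha}_{q_1^{*},c(\mathbb{N})}$ (the little-$\text{o}$ decay is preserved). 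The same argument upgrades $g$, and in case (2) the exponent bookkeeping $\f1{q_1^{*}}+\f1{q_2^{*}}=\f1{q_1}+\f1{q_2}-\f2d=\f1q$ is exactly the balance required to apply case (1) with the starred exponents; in case (3) only $g$ is differentiated and $\f1{q_1}+\f1{q_2^{*}}=\f1q$ plays the same role. Hence (2) and (3) follow from (1).

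I expect the main obstacle to be the bookkeeping of the little-$\text{o}$ rate under time integration, rather than any single estimate. Concretely, one must verify that $\|\delta_yf\|_{L^{p_1}(0,T;L^{q_1})}=\text{o}(|y|^{\alpha})$ genuinely holds for $f\in L^{p_1}(0,T;\dot{B}^{\alpha}_{q_1,c(\mathbb{N})})$ (not merely for a.e.\ fixed $t$), and that multiplying this by the $\text{O}(|y|^{\beta})$ factor and integrating against $\eta_\varepsilon$ over $\{|y|\le\varepsilon\}$ preserves the $\text{o}(\varepsilon^{\alpha+\beta})$ gain uniformly; writing the $\text{o}$ as $|y|^{\alpha}\omega(|y|)$ with a nondecreasing modulus $\omega(\varepsilon)\to0$ makes this transparent. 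The $c(\mathbb{N})$ hypothesis on exactly one factor is precisely what converts the classical $\text{O}(\varepsilon^{\alpha+\beta})$ Constantin--E--Titi bound into the sharp $\text{o}$ statement needed in the later conservation arguments.
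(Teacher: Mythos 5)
Your proof of case (1) is correct and is essentially the paper's own argument: the same Constantin--E--Titi identity, Minkowski plus H\"older, and the finite-difference characterization of $\dot{B}^{\alpha}_{q,c(\mathbb{N})}$ versus $\dot{B}^{\beta}_{q,\infty}$ from Corollary \ref{coro} and Lemma \ref{lem2.2}, with the little-$o$ coming from the single $c(\mathbb{N})$ factor exactly as you describe. For cases (2) and (3) you diverge mildly from the paper: you first upgrade the spaces on the frequency side, using the reverse Bernstein estimate $\|\dot\Delta_j f\|_{L^{q_1}}\lesssim 2^{-j}\|\dot\Delta_j\nabla f\|_{L^{q_1}}$ together with Lemma \ref{berinequ} to conclude $f\in\dot{B}^{\alpha}_{q_1^{*},c(\mathbb{N})}$ from $\nabla f\in\dot{B}^{\alpha}_{q_1,c(\mathbb{N})}$, and then invoke case (1) as a black box with the starred exponents; the paper instead keeps the commutator decomposition and applies the Sobolev embedding $W^{1,q_i}\hookrightarrow L^{q_i^{\natural}}$ in physical space directly to the differences $f(\cdot-y)-f(\cdot)$ and to $f-f^{\varepsilon}$, so that the gradient hypotheses are consumed inside the same chain of inequalities. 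The two routes are equivalent and the exponent bookkeeping matches in both; yours is slightly more modular, while the paper's avoids appealing to the annulus-type Bernstein inequality that is not literally stated in Lemma \ref{berinequ}. Your closing remark about writing the little-$o$ as $|y|^{\alpha}\omega(|y|)$ with a modulus $\omega$ is exactly the right way to justify passing the $o(\varepsilon^{\alpha+\beta})$ rate through the integration against $\eta_{\varepsilon}$ on $\{|y|\le\varepsilon\}$.
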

 \begin{remark}
  The estimate that $\int_0^T\int | (fg)^{\varepsilon}- f^{\varepsilon}g^{\varepsilon}||\nabla h |dxdt$ frequently appears in the study of energy (helicity) conservation  of fluid  equations
    (see \cite{[CET],[WYY],[YWW],[CY],[De Rosa],[WY]}).
 Therefore, it seems that this lemma is very helpful in this research of weak solutions in critical Besov spaces.
 \end{remark}
 \begin{remark}\label{rem2.3} We would like to point out that $o(\cdot)$ should be replaced by $O(\cdot)$ if the space $\dot{B}^{\alpha}_{q_{1},c(\mathbb{N})}$  in the condition of this lemma is replaced by the space $\dot{B}^{\alpha}_{q_{1},\infty}$. \end{remark}
 \begin{remark}\label{rem2.4} It is worth remarking that $\nabla$ may be replaced by curl for incompressible flow if $1<q_{1},q_{2}<d$.
 \end{remark}
\begin{remark}
The results of  Lemma \ref{lem2.1}-\ref{lem2.3}   still  hold for the nonhomogeneous Besov spaces.
\end{remark}
\begin{proof}
First, we recall the following   identity observed  by Constantin-E-Titi   in \cite{[CET]} that
	$$\ba&(fg)^{\varepsilon}(x)- f^{\varepsilon}g^{\varepsilon}(x)\\
=&
\int_{\Omega}\eta_{\varepsilon}(y)
[f(x-y)-f(x)][g(x-y)-g(x)]dy-
(f-f^{\varepsilon})(g-g^{\varepsilon})(x).
	\ea$$
(1)	Using the H\"older's inequality and Minkowski inequality, we obtain
	$$\begin{aligned} & \|(fg)^{\varepsilon}- f^{\varepsilon}g^{\varepsilon}\|_{L^p(0,T;L^q)}\\ \leq&C\int_{|y|\leq\varepsilon}\eta_{\varepsilon}(y)\| f(\cdot-y)-f(\cdot)\|_{L^{p_1}(0,T;L^{q_1})} \| g(\cdot-y)-g(\cdot)\|_{L^{p_2}(0,T;L^{q_2})}dy \\&+C\| f-f^{\varepsilon}\|_{L^{p_1}(0,T;L^{q_1})}  \| g-g^{\varepsilon}\|_{L^{p_2}(0,T;L^{q_2})}\\
	\leq& o(\varepsilon^{\alpha+\beta})\| f\|_{L^{p_1}(0,T;\dot{B}^{\alpha}_{q_{1},c(\mathbb{N})})}
	\|g \|_{L^{p_2}(0,T;\dot{B}^{\beta}_{q_{2},\infty})},
	\end{aligned} $$
where Corollary \ref{coro} and Lemma \ref{lem2.2} are used.

(2)	Let $\frac{1}{q}=\frac{1}{q_1^\natural}+\frac{1}{q_2^\natural}$ and  $\frac{1}{q_i^{\natural}}=\frac{1}{q_i}-\frac{1}{d}$, $i=1,2.$ A slight variant of the above proof implies that
		$$\begin{aligned} & \|(fg)^{\varepsilon}- f^{\varepsilon}g^{\varepsilon}\|_{L^p(0,T;L^q)}\\ \leq&C\int_{|y|\leq\varepsilon}\eta_{\varepsilon}(y)\| f(\cdot-y)-f(\cdot)\|_{L^{p_1}(0,T;L^{q_1^{\natural}})} \| g(\cdot-y)-g(\cdot)\|_{L^{p_2}(0,T;L^{q_2^{\natural}})}dy \\&+C\| f-f^{\varepsilon}\|_{L^{p_1}(0,T;L^{q_1^{\natural}})}  \| g-g^{\varepsilon}\|_{L^{p_2}(0,T;L^{q_2^{\natural}})}\\
\leq&C\int_{|y|\leq\varepsilon}\eta_{\varepsilon}(y)\| \nabla f(\cdot-y)-\nabla f(\cdot)\|_{L^{p_1}(0,T;L^{q_1})} \| \nabla g(\cdot-y)-\nabla g(\cdot)\|_{L^{p_2}(0,T;L^{q_2})}dy \\&+C\| \nabla f- \nabla f^{\varepsilon}\|_{L^{p_1}(0,T;L^{q_1})}  \| \nabla g-\nabla g^{\varepsilon}\|_{L^{p_2}(0,T;L^{q_2})}\\
	\leq& o(\varepsilon^{\alpha+\beta})\| \nabla f\|_{L^{p_1}(0,T;\dot{B}^{\alpha}_{q_{1},c(\mathbb{N})})}
	\|\nabla g \|_{L^{p_2}(0,T;\dot{B}^{\beta}_{q_{2},\infty})},
	\end{aligned} $$
where we have used the H\"older inequality and Sobolev embedding theorem.

(3)	Arguing in the same manner as above, we know that
$$\begin{aligned} & \|(fg)^{\varepsilon}- f^{\varepsilon}g^{\varepsilon}\|_{L^p(0,T;L^q)}\\ \leq&C\int_{|y|\leq\varepsilon}\eta_{\varepsilon}(y)\| f(\cdot-y)-f(\cdot)\|_{L^{p_1}(0,T;L^{q_1})} \| g(\cdot-y)-g(\cdot)\|_{L^{p_2}(0,T;L^{q_2^{\natural}})}dy \\&+C\| f-f^{\varepsilon}\|_{L^{p_1}(0,T;L^{q_1})}  \| g-g^{\varepsilon}\|_{L^{p_2}(0,T;L^{q_2^{\natural}})}\\
\leq&C\int_{|y|\leq\varepsilon}\eta_{\varepsilon}(y)\| f(\cdot-y)- f(\cdot)\|_{L^{p_1}(0,T;L^{q_1})} \| \nabla g(\cdot-y)-\nabla g(\cdot)\|_{L^{p_2}(0,T;L^{q_2})}dy \\&+C\|  f- f^{\varepsilon}\|_{L^{p_1}(0,T;L^{q_1})}  \| \nabla g-\nabla g^{\varepsilon}\|_{L^{p_2}(0,T;L^{q_2})}\\
	\leq& o(\varepsilon^{\alpha+\beta})\| \nabla f\|_{L^{p_1}(0,T;\dot{B}^{\alpha}_{q_{1},c(\mathbb{N})})}
	\|\nabla g \|_{L^{p_2}(0,T;\dot{B}^{\beta}_{q_{2},\infty})},
	\end{aligned} $$
where the indices are required to satisfy $\frac{1}{q}=\frac{1}{q_1}+\frac{1}{q_2^\natural}$ and $\frac{1}{q_2^\natural}=\frac{1}{q_2}-\frac{1}{d}$ with $1\leq q_2<d$.

	Then the proof of this lemma is completed.
\end{proof}

\begin{lemma}\label{lem2.7}
	Let $ p,q,p_1,q_1,p_2,q_2\in[1, \infty)$ with $\frac{1}{p}=\frac{1}{p_1}+\frac{1}{p_2},\frac{1}{q}=\frac{1}{q_1}+\frac{1}{q_2} $. Assume that $f\in L^{p_1}(0,T;L^{q_1}(\Omega)) $ and $g\in L^{p_2}(0,T;L^{q_2}(\Omega))$. Then, as $\varepsilon\rightarrow0$, there hold
	\begin{equation}\label{b6}
		\|(fg)^\varepsilon-f^\varepsilon g^\varepsilon\|_{L^p(0,T;L^q(\Omega))}\rightarrow 0,
	\end{equation}
and
\begin{equation}\label{b7}
	\|(f\times g)^\varepsilon-f^\varepsilon\times g^\varepsilon\|_{L^p(0,T;L^q(\Omega))}\rightarrow 0.
\end{equation}
\end{lemma}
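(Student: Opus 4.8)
The plan is to start from the Constantin--E--Titi commutator identity already recalled in the proof of Lemma~\ref{lem2.3}, namely
$$(fg)^{\varepsilon}(x)- f^{\varepsilon}g^{\varepsilon}(x)=\int_{\Omega}\eta_{\varepsilon}(y)[f(x-y)-f(x)][g(x-y)-g(x)]\,dy-(f-f^{\varepsilon})(g-g^{\varepsilon})(x),$$
and to estimate the two resulting pieces separately. Unlike in Lemma~\ref{lem2.3}, no Besov regularity is assumed here, so the quantitative rate $\text{o}(\varepsilon^{\alpha+\beta})$ is replaced by mere qualitative convergence; the whole argument rests on two elementary facts that hold \emph{precisely because every exponent is finite}: first, mollification is an approximate identity, so $\|h^{\varepsilon}-h\|_{L^r}\to0$ as $\varepsilon\to0$ for $h\in L^r$ with $1\le r<\infty$; second, translation is continuous in $L^r$, i.e. $\|h(\cdot-y)-h(\cdot)\|_{L^r}\to0$ as $|y|\to0$ for $1\le r<\infty$.

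For the second term I would use H\"older's inequality in both the space and time variables to get
$$\|(f-f^{\varepsilon})(g-g^{\varepsilon})\|_{L^p(0,T;L^q)}\le \|f-f^{\varepsilon}\|_{L^{p_1}(0,T;L^{q_1})}\,\|g-g^{\varepsilon}\|_{L^{p_2}(0,T;L^{q_2})}.$$
To see that each factor tends to $0$, note that for a.e. $t$ the approximate-identity property gives $\|f^{\varepsilon}(t)-f(t)\|_{L^{q_1}}\to0$, while this quantity is dominated by $2\|f(t)\|_{L^{q_1}}\in L^{p_1}(0,T)$; dominated convergence in $t$ then yields $\|f-f^{\varepsilon}\|_{L^{p_1}(0,T;L^{q_1})}\to0$, and likewise for $g$.

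For the first term I would apply Minkowski's integral inequality to pull the $L^p(0,T;L^q)$ norm inside the $y$-integral, and then H\"older in $(t,x)$, obtaining the bound
$$\int_{|y|\le\varepsilon}\eta_{\varepsilon}(y)\,\|f(\cdot-y)-f(\cdot)\|_{L^{p_1}(0,T;L^{q_1})}\,\|g(\cdot-y)-g(\cdot)\|_{L^{p_2}(0,T;L^{q_2})}\,dy.$$
Exactly as above, continuity of translation in $L^{q_i}$ together with dominated convergence in $t$ shows that $\|f(\cdot-y)-f\|_{L^{p_1}(0,T;L^{q_1})}\to0$ and $\|g(\cdot-y)-g\|_{L^{p_2}(0,T;L^{q_2})}\to0$ as $|y|\to0$; since the $y$-integration is confined to $|y|\le\varepsilon$ and $\int\eta_{\varepsilon}=1$, the whole expression is dominated by $\sup_{|y|\le\varepsilon}\|f(\cdot-y)-f\|_{L^{p_1}(0,T;L^{q_1})}\,\sup_{|y|\le\varepsilon}\|g(\cdot-y)-g\|_{L^{p_2}(0,T;L^{q_2})}$, which vanishes as $\varepsilon\to0$. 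Combining the two pieces proves \eqref{b6}.

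Finally, \eqref{b7} would follow from \eqref{b6} with essentially no extra work: writing the cross product componentwise, each component is a finite linear combination of scalar products $f_jg_k$ of components lying in the same Lebesgue spaces, so applying \eqref{b6} to each such product and summing gives the claim. I expect the only genuinely delicate point to be the justification of the dominated-convergence steps — that is, producing an integrable-in-time majorant for the translation and mollification differences — which is exactly where the hypothesis $p_1,p_2,q_1,q_2<\infty$ enters; everything else is routine H\"older--Minkowski bookkeeping.
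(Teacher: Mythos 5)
Your proof is correct, but it takes a different route from the paper's. You reuse the Constantin--E--Titi commutator identity from the proof of Lemma~\ref{lem2.3} and control the two resulting pieces qualitatively, via continuity of translations in $L^{r}$ for finite $r$ (for the integral term) and the approximate-identity property of mollification (for the product term), with dominated convergence in time supplying the Bochner-norm convergence in both cases. The paper instead avoids the CET identity entirely and uses the elementary three-term splitting
\begin{equation*}
(fg)^{\varepsilon}-f^{\varepsilon}g^{\varepsilon}=\bigl[(fg)^{\varepsilon}-fg\bigr]+\bigl[(f-f^{\varepsilon})g\bigr]+\bigl[f^{\varepsilon}(g-g^{\varepsilon})\bigr],
\end{equation*}
estimating each bracket by H\"older and the fact that $h^{\varepsilon}\to h$ in $L^{r}$ for $r<\infty$ (applied to $fg\in L^{p}L^{q}$, $f\in L^{p_1}L^{q_1}$ and $g\in L^{p_2}L^{q_2}$); this makes the proof slightly shorter and dispenses with the Minkowski-plus-translation-continuity step, at the cost of needing the approximate-identity property for the product $fg$ as well. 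Your approach has the virtue of being structurally uniform with Lemma~\ref{lem2.3} (it is literally that proof with the $\text{o}(\varepsilon^{\alpha+\beta})$ rates replaced by qualitative limits), and your identification of where the finiteness of all exponents enters is exactly right. The treatment of \eqref{b7} by componentwise reduction to \eqref{b6} matches the paper's.
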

\begin{proof}
First, it follows from the triangle inequality that
	\begin{equation*}
		\begin{aligned}
			&\|(fg)^\varepsilon-f^\varepsilon g^\varepsilon\|_{L^p(0,T;L^q)}\\
			\leq & C\left(\|(fg)^\varepsilon- (fg)\|_{L^p(0,T;L^q)}+\|fg-f^\varepsilon g\|_{L^p(0,T;L^q)}+\|f^\varepsilon g-f^\varepsilon g^\varepsilon\|_{L^p(0,T;L^q)}\right)\\
			\leq &C\Big(\|(fg)^\varepsilon- fg\|_{L^p(0,T;L^q)}+\|f-f^\varepsilon\|_{L^{p_1}(0,T;L^{q_1})}\|g\|_{L^{p_2}(0,T;L^{q_2})}\\
			&\ \ \ \ \ +\|f^\varepsilon\|_{L^{p_1}(0,T;L^{q_1})}\|g-g^\varepsilon\|_{L^{p_2}(0,T;L^{q_2})}\Big),
		\end{aligned}
	\end{equation*}
	which together with the properties of the standard mollifiers yields \eqref{b6}.
	
	Furthermore, to conclude \eqref{b7}, we only consider the case that the spatial dimension $d=3$, since the proof for the case $d=2$ is similar. Without loss of generality, we assume that $f=(f_1,f_2,f_3)$ and $g=(g_1,g_2,g_3)$.
	Then by a direct computation, we have
	\begin{equation}
		\begin{aligned}\label{b8}
			&\|(f\times g)^\varepsilon-f^\varepsilon \times g^\varepsilon\|_{L^p(0,T;L^q)}=\Big\|\left(\left|
				\begin{matrix}
					\overrightarrow{i}&\overrightarrow{j}&\overrightarrow{k}\\
					f_1&f_2&f_3\\
					g_1&g_2&g_3\\
				\end{matrix}
			\right|\right)^\varepsilon-\left|
			\begin{matrix}
				\overrightarrow{i}&\overrightarrow{j}&\overrightarrow{k}\\
				f_1^\varepsilon&f_2^\varepsilon&f_3^\varepsilon\\
				g_1^\varepsilon&g_2^\varepsilon&g_3^\varepsilon\\
			\end{matrix}
			\right|\Big\|_{L^p(0,T;L^q)}\\
			=&\Big\|\left((f_2g_3-f_3g_2)\overrightarrow{i}-(f_1g_3-f_3g_1)\overrightarrow{j}+(f_1g_2-f_2g_1)\overrightarrow{k}\right)^\varepsilon\\
			&~~~~-\left((f_2^\varepsilon g_3^\varepsilon-f_3^\varepsilon g_2^\varepsilon)\overrightarrow{i}-(f_1^\varepsilon g_3^\varepsilon-f_3^\varepsilon g_1^\varepsilon)\overrightarrow{j}+(f_1^\varepsilon g_2^\varepsilon-f_2^\varepsilon g_1^\varepsilon)\overrightarrow{k}\right)\Big\|_{L^p(0,T;L^q)}\\
			=&\Big\|\left[\Big((f_2g_3)^\varepsilon-f_2^\varepsilon g_3^\varepsilon\Big)-\Big((f_3g_2)^\varepsilon-f_3^\varepsilon g_2^\varepsilon\Big)\right]\overrightarrow{i}-\left[\Big((f_1g_3)^\varepsilon-f_1^\varepsilon g_3^\varepsilon\Big)-\Big((f_3g_1)^\varepsilon-f_3^\varepsilon g_1^\varepsilon\Big)\right]\overrightarrow{j}\\
			&+\left[\Big((f_1g_2)^\varepsilon-f_1^\varepsilon g_2^\varepsilon\Big)-\Big((f_2g_1)^\varepsilon-f_2^\varepsilon g_1^\varepsilon\Big)\right]\overrightarrow{k}\Big\|_{L^p(0,T;L^q)},
		\end{aligned}
	\end{equation}
which together with the triangle inequality and \eqref{b6} leads to \eqref{b7}.
\end{proof}
For the convenience of readers, we end this section by presenting the definition of weak solutions to the compressible Euler equations \eqref{CEuler},   incompressible Euler equations  \eqref{Euler}  and the surface quasi-geostrophic equation
 \eqref{qg}, respectively.
 \begin{definition}\label{ceulerdefi}
 	A pair ($\rho,v$) is called a weak solution to  the   compressible Euler equations \eqref{CEuler},  if ($\rho,v$) satisfies

	\begin{enumerate}[(i)]
		\item for any test function $\phi\in C_0^\infty((0,T)\times \Omega)$, there holds
		$$\int^T_0\int_{\Omega}\rho(x,t)\partial_t \phi(x,t)+ \rho(x,t)v(x,t)\nabla \phi(x,t)dxdt=0.$$
		\item
  for any  test vector field $\varphi\in C_{0}^{\infty}((0,T)\times\Omega)$, there holds
		\begin{equation*}
			\begin{aligned}
 \int_{0}^{T}\int_{\Omega}\rho(x,t)v(x,t)\partial_{t}\varphi(x,t)&+(\rho(x,t)v(x,t)\otimes v(x,t)) \nabla\varphi(x,t)\\+&
			\pi(\rho )(x,t)\text{div} \varphi(x,t)dxdt=0.
				\end{aligned}	\end{equation*}
	\item
	the energy inequality holds
	\begin{equation}\label{energyineq}
		\begin{aligned}
			\mathcal{E}(t)   \leq \mathcal{E}(0), 		\end{aligned}\end{equation}
	where $\mathcal{E}(t)=\int_{\Omega}\left( \frac{1}{2}\rho |v|^2+\kappa\f{\rho^{\gamma}}{\gamma-1} \right) dx$.
		
	\end{enumerate}
\end{definition}
 \begin{definition}\label{eulerdefi}
	A vector field $v\in C_{\text{weak}}([0,T];L^{2}(\Omega))$ is called  a weak solution of the Euler equations \eqref{Euler} with initial data $v_{0}\in L^{2}(\Omega)$, if  $ v$  satisfies
	\begin{enumerate}[(i)]
		\item  for any divergence-free test function $\varphi\in C_{0}^{\infty}((0,T)\times\Omega)$, there holds
		$$
		\int_{0}^{T}\int_{\Omega}v(x,t)\partial_{t}\varphi(x,t)+v(x,t)\otimes v(x,t)\cdot\nabla\varphi(x,t)dxdt=0.
		$$
		\item[(ii)]
		$v$ is weakly divergence-free, namely, for every scalar test function $\psi\in C_{0}^{\infty}((0,T)\times\Omega)$,
		$$\int_0^T\int_{\Omega} \Div v\cdot\psi dxdt=0.$$
		
	\end{enumerate}
\end{definition}
\begin{definition}\label{qgdefi}
	A vector field $\theta\in C([0,T];L^{2}(\Omega))$  is called  a weak solution of the 2-D  quasi-geostrophic equation \eqref{qg} with initial data $\theta_{0}\in L^{2}(\Omega)$, if there hold	
\begin{equation}
	\int_{\Omega}[\theta(x,T) \varphi                                     (x,T)- \theta(x,0) \varphi(x,0)]dx =\int_{0}^{T}\int_{\Omega}\theta(x,t)(\partial_{t}\varphi(x,t)+v\cdot\nabla\varphi)dxdt,
\end{equation}
	and
	\begin{equation}\label{1.9}
		v(x,t)=-\nabla^{\perp}\int_{\Omega} \frac{\theta(y,t)}{|x-y|}dy
	\end{equation}
	for any   test function $\varphi\in C_{0}^{\infty}([0,T];C^{\infty}(\Omega))$, where $\nabla^{\perp}$ in \eqref{1.9} is in the sense of
	distributions.
\end{definition}
\section{ Helicity  conservation for the compressible Euler equations}
This section contains the proof of helicity conservation of weak solutions in Onsager type spaces $\dot{B}^{\f13}_{p,c(\mathbb{N})}$ for the  compressible Euler equations.
\begin{prop}\label{propo1}
	Let $(\rho,v)$ be a weak solution of compressible Euler equations \eqref{CEuler} in the sense of Definition \ref{ceulerdefi}, and $\text{div\,}v,\text{curl\,}v\in C([0,T];L^{\f{2d}{d+1}}(\Omega))$. If $(\rho, v)$ additionally satisfies
 $$\ba
 & 0<c_1\leq \rho \leq c_2<\infty, ~\rho \in L^3(0,T;\dot{B}^{\frac{1}{3}}_{3,c(\mathbb{N})}),~\rho v \in L^3(0,T;\dot{B}^{\frac{1}{3}}_{3,\infty}),\\
  &v \in L^3(0,T;B^{\frac{1}{3}}_{3,c(\mathbb{N})}),
 \ea$$
  then $(\rho,v)$ is also a weak solution of system \eqref{rCEuler1} in the sense of distributions.
\end{prop}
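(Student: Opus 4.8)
Since the continuity equation is common to \eqref{CEuler} and \eqref{rCEuler1}, the whole task is to show that $v$ obeys the non-conservative velocity equation $v_t+v\cdot\nabla v+\nabla\Pi(\rho)=0$ weakly. The guiding identity is that, for smooth solutions,
$$\partial_t(\rho v)+\Div(\rho v\otimes v)+\nabla\pi(\rho)=v\,\big(\partial_t\rho+\Div(\rho v)\big)+\rho\,\big(\partial_t v+v\cdot\nabla v+\nabla\Pi(\rho)\big),$$
using $\nabla\pi(\rho)=\rho\,\nabla\Pi(\rho)$; once the continuity equation is invoked, the momentum equation is exactly $\rho$ times the velocity equation, and since $\rho\ge c_1>0$ one may divide by $\rho$. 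The plan is to make this division rigorous by mollifying in space and exploiting the Onsager-critical regularity through the commutator estimate of Lemma \ref{lem2.3}.

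First I would mollify both equations of \eqref{CEuler} in $x$: testing Definition \ref{ceulerdefi} against $\eta_\varepsilon(x-\cdot)\psi(t)$ gives that $\partial_t\rho^\varepsilon+\Div(\rho v)^\varepsilon=0$ and $\partial_t(\rho v)^\varepsilon+\Div(\rho v\otimes v)^\varepsilon+\nabla(\pi(\rho))^\varepsilon=0$ hold classically in $x$ and absolutely continuously in $t$. Since mollification is an average, $\rho^\varepsilon\ge c_1$, so $1/\rho^\varepsilon$ is a bounded, spatially smooth multiplier. Multiplying the mollified momentum equation by $1/\rho^\varepsilon$ and subtracting $(\rho v)^\varepsilon/(\rho^\varepsilon)^2$ times the mollified continuity equation, the time derivatives assemble by the quotient rule into $\partial_t w^\varepsilon$ with $w^\varepsilon:=(\rho v)^\varepsilon/\rho^\varepsilon$, and one obtains
$$\partial_t w^\varepsilon+\frac{1}{\rho^\varepsilon}\Div(\rho v\otimes v)^\varepsilon-\frac{(\rho v)^\varepsilon}{(\rho^\varepsilon)^2}\Div(\rho v)^\varepsilon+\frac{1}{\rho^\varepsilon}\nabla(\pi(\rho))^\varepsilon=0.$$
I would then test this against an arbitrary $\varphi\in C_0^\infty((0,T)\times\Omega)$ and let $\varepsilon\to0$.

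The benign terms pass to the limit directly: $w^\varepsilon\to v$ in $L^3(0,T;L^3)$ (since $(\rho v)^\varepsilon\to\rho v$ and $\rho^\varepsilon\to\rho\ge c_1$), so the time-derivative term tends to $-\int\!\!\int v\cdot\partial_t\varphi$; and, after one integration by parts, the leading part of the convective term tends to $-\int\!\!\int(v\otimes v):\nabla\varphi$ by the standard mollification convergence of Lemma \ref{lem2.7}. The real difficulty lies in the remaining contributions of the convective and pressure terms. Integrating $\Div(\rho v\otimes v)^\varepsilon$ and $\nabla(\pi(\rho))^\varepsilon$ by parts against $\varphi/\rho^\varepsilon$ produces terms carrying $\nabla\rho^\varepsilon$ (equivalently $\Div(\rho v)^\varepsilon$), each of which diverges like $\varepsilon^{-2/3}$ by Lemma \ref{lem2.2}. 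These terms are precisely what must reconstruct the $-(\Div v)(v\cdot\varphi)$ contribution and convert $\nabla\pi(\rho)$ into the pressure $-\Pi(\rho)\Div\varphi$; upon adding and subtracting products of mollifications, each splits into a convergent part plus remainders of the schematic form $\big[(fg)^\varepsilon-f^\varepsilon g^\varepsilon\big]\nabla\rho^\varepsilon$, with $f,g$ drawn from $v,\rho v,\rho,\pi(\rho),\Pi(\rho)$ and their $C^1$ composites. The bracket is a Constantin-E-Titi commutator of size $\text{o}(\varepsilon^{2/3})$ by Lemma \ref{lem2.3}, whereas $\nabla\rho^\varepsilon=\text{O}(\varepsilon^{-2/3})$; hence, under the Onsager balance $\tfrac13+\tfrac13+\tfrac13=1$, every remainder is $\text{o}(1)$ in $L^1((0,T)\times\Omega)$, the $c(\mathbb{N})$ class supplying the strict decay. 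This cancellation of individually divergent commutator terms is the main obstacle, and is exactly where the hypotheses $\rho\in L^3(0,T;\dot{B}^{1/3}_{3,c(\mathbb{N})})$, $\rho v\in L^3(0,T;\dot{B}^{1/3}_{3,\infty})$, $v\in L^3(0,T;B^{1/3}_{3,c(\mathbb{N})})$ and $\pi\in C^2$ are consumed.

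One preliminary point underlies the whole argument: to apply Lemma \ref{lem2.3} to the pressure and quotient factors I must know that $\pi(\rho)$, $\Pi(\rho)$ and $1/\rho$ (and composites such as $\pi(\rho)/\rho^2$) again belong to $\dot{B}^{1/3}_{3,c(\mathbb{N})}$. This follows from the finite-difference characterization of Corollary \ref{coro}, since any $C^1$ (hence Lipschitz) function $\Phi$ of $\rho\in[c_1,c_2]$ satisfies $|\Phi(\rho(\cdot-y))-\Phi(\rho(\cdot))|\le L\,|\rho(\cdot-y)-\rho(\cdot)|$ pointwise, so the modulus $\text{o}(|y|^{1/3})$ is inherited from $\rho$; the hypothesis $\pi\in C^2[c_1,c_2]$ guarantees this regularity for the pressure primitives. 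Passing to the limit then yields the weak velocity equation for every $\varphi$, and together with the inherited continuity equation this shows that $(\rho,v)$ is a weak solution of \eqref{rCEuler1}; the assumption $\Div v,\mathrm{curl}\,v\in C([0,T];L^{2d/(d+1)})$ ensures that the convective term of the limiting equation is a well-defined integrable function.
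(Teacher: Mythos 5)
Your overall strategy is the paper's: mollify both equations in space, divide the mollified momentum equation by $\rho^\varepsilon\ge c_1$, and kill the resulting error terms by pairing Constantin--E--Titi commutators of size $o(\varepsilon^{2/3})$ against gradients of mollified quantities of size $O(\varepsilon^{-2/3})$. Your algebraic packaging via $w^\varepsilon=(\rho v)^\varepsilon/\rho^\varepsilon$ is an equivalent reorganization of the paper's, which instead works with $\rho^\varepsilon v^\varepsilon$ and the explicit commutators $(\rho v)^\varepsilon-\rho^\varepsilon v^\varepsilon$ and $(\rho v\otimes v)^\varepsilon-(\rho v)^\varepsilon\otimes v^\varepsilon$ (the terms $I_1$--$I_3$ in \eqref{p5}); for the time-derivative and convective contributions your argument is sound and matches the paper's estimates.

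The gap is in the pressure term. The error you must control there is the \emph{composition} commutator $\pi(\rho^\varepsilon)-(\pi(\rho))^\varepsilon$ (equivalently, the mismatch between $\frac{1}{\rho^\varepsilon}\nabla(\pi(\rho))^\varepsilon$ and the exact chain-rule quantity $\nabla\Pi(\rho^\varepsilon)=\frac{1}{\rho^\varepsilon}\nabla\pi(\rho^\varepsilon)$), which after integration by parts is paired with $\nabla\rho^\varepsilon=O(\varepsilon^{-2/3})$. This is not of the product form $(fg)^\varepsilon-f^\varepsilon g^\varepsilon$, so Lemma \ref{lem2.3} does not apply to it, and your proposed substitute --- that $\pi(\rho)$, $\Pi(\rho)$, $1/\rho$ inherit $\dot{B}^{1/3}_{3,c(\mathbb{N})}$ regularity because these composites are Lipschitz on $[c_1,c_2]$ --- only yields the first-order bound $\|\pi(\rho^\varepsilon)-(\pi(\rho))^\varepsilon\|_{L^{3/2}(0,T;L^{3/2})}\le \|\pi(\rho^\varepsilon)-\pi(\rho)\|+\|(\pi(\rho))^\varepsilon-\pi(\rho)\|=o(\varepsilon^{1/3})$, which against $\varepsilon^{-2/3}$ still diverges. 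The correct mechanism, and the place where $\pi\in C^2$ (rather than merely $C^1$) is genuinely consumed, is the second-order Taylor expansion \eqref{p9}--\eqref{pi}: $|\pi(\rho^\varepsilon)-(\pi(\rho))^\varepsilon|\le C|\rho^\varepsilon-\rho|^2+C\big((\rho(t,\cdot)-\rho(t,x))^2\ast\eta_\varepsilon\big)(x)$, whose $L^{3/2}$ norm is $o(\varepsilon^{2/3})$ by Lemma \ref{lem2.2} since it is quadratic in increments of $\rho$. Your write-up invokes the $C^2$ hypothesis but never actually uses more than Lipschitz continuity; inserting the quadratic estimate (the paper's treatment of $I_4$ in \eqref{p11}) closes the gap, after which the rest of your argument goes through.
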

\begin{proof}
	First,  we mollify the momentum equation $\eqref{CEuler}_2$ in space (with the kernel and notation as in Section 2):
	\begin{eqnarray}\label{p2}
		(\rho v)^\varepsilon_t+\text{div}(\rho v\otimes v)^\varepsilon +\nabla \pi(\rho)^\varepsilon=0.
	\end{eqnarray}
	Then, in virtue of appropriate commutators, the above equation can be rewritten as
	\begin{equation}
		\begin{aligned}
			(\rho^\varepsilon v^\varepsilon)_t &+\text{div}((\rho  v)^\varepsilon\otimes v^\varepsilon)+\nabla \pi(\rho^\varepsilon)\\
			&=-[(\rho v)^\varepsilon-(\rho^\varepsilon v^\varepsilon)]_t-\text{div}[(\rho v\otimes v)^\varepsilon-(\rho  v)^\varepsilon \otimes v^\varepsilon]-\nabla [\pi(\rho^\varepsilon)-\pi(\rho)^\varepsilon].
		\end{aligned}
	\end{equation}
	Taking into account of  mollified version of the continuity
	equation
	\begin{equation}\label{p1}
		\rho^\varepsilon_t+\text{div}(\rho v)^\varepsilon=0,
	\end{equation}
	a direct computation shows
	\begin{equation}\label{p3}
		\begin{aligned}
			\rho^\varepsilon v^\varepsilon_t+\rho^\varepsilon v^\varepsilon \cdot \nabla v^\varepsilon+\nabla \pi(\rho^\varepsilon)=&-[ (\rho v)^\varepsilon-\rho^\varepsilon v^\varepsilon] \cdot \nabla v^\varepsilon
			-[(\rho v)^\varepsilon-(\rho^\varepsilon v^\varepsilon)]_t\\
			&-\text{div}[(\rho v\otimes v)^\varepsilon-(\rho  v)^\varepsilon \otimes v^\varepsilon]-\nabla [\pi(\rho^\varepsilon)-\pi(\rho)^\varepsilon].
		\end{aligned}
	\end{equation}
	Since $0<c_1\leq \rho \leq c_2<\infty$, setting $\Pi(\rho^\varepsilon )= \int_{1}^{\rho^\varepsilon}\f{\pi'(s)}{s}ds$ and  dividing both sides of equation \eqref{p3} by $\rho^\varepsilon$, we arrive at
	\begin{equation}\label{p4}
		\begin{aligned}
			&v^\varepsilon_t+\omega^\varepsilon \times v^\varepsilon+\nabla\left( \Pi(\rho^\varepsilon)+\frac{1}{2}|v^\varepsilon|^2\right)\\
			=&-\frac{1}{\rho^\varepsilon}[ (\rho v)^\varepsilon-\rho^\varepsilon v^\varepsilon] \cdot \nabla v^\varepsilon
			-\frac{1}{\rho^\varepsilon}[(\rho v)^\varepsilon-(\rho^\varepsilon v^\varepsilon)]_t\\
			&-\frac{1}{\rho^\varepsilon}\text{div}[(\rho v\otimes v)^\varepsilon-(\rho  v)^\varepsilon \otimes v^\varepsilon]
			-	\frac{1}{\rho^\varepsilon}\nabla [\pi(\rho^\varepsilon)-\pi(\rho)^\varepsilon],
		\end{aligned}
	\end{equation}
	where we have used the identity that $v^\varepsilon\cdot \nabla v^\varepsilon=\frac{1}{2}\nabla |v^\varepsilon|^2 +\omega^\varepsilon \times v^\varepsilon$ with $\omega =\text{curl\,} v$.
	
	Let $\varphi(x,t)\in C_0^\infty((0,T)\times \Omega)$ be a test function. Multiplication with $\varphi$ and then integration yield that
	\begin{equation}\label{p5}
		\begin{aligned}
			\int_0^t \int_{\Omega}& \left(v^\varepsilon_\tau+\omega^\varepsilon \times v^\varepsilon+\nabla\left( \Pi(\rho^\varepsilon)+\frac{1}{2}|v^\varepsilon|^2\right)\right)\varphi  dxd\tau\\
			=&\int_0^t\int_{\Omega} \Big(-\frac{1}{\rho^\varepsilon}[ (\rho v)^\varepsilon-\rho^\varepsilon v^\varepsilon] \cdot \nabla v^\varepsilon
			-\frac{1}{\rho^\varepsilon}[(\rho v)^\varepsilon-(\rho^\varepsilon v^\varepsilon)]_\tau\\
			&-\frac{1}{\rho^\varepsilon}\text{div}[(\rho v\otimes v)^\varepsilon-(\rho  v)^\varepsilon \otimes v^\varepsilon]
			-	\frac{1}{\rho^\varepsilon}\nabla [\pi(\rho^\varepsilon)-\pi(\rho)^\varepsilon]\Big)\varphi dxd\tau\\
			=&I_1+I_2+I_3+I_4.
		\end{aligned}
	\end{equation}
	Next we will show that the four terms $I_1-I_4$ on the RHS of \eqref{p5} tend to zero as $\varepsilon \rightarrow 0$.
	
	Indeed, note that
 $$
 \rho \in L^3(0,T;\dot{B}^{\frac{1}{3}}_{3,c(\mathbb{N})}), \rho v \in L^3(0,T;\dot{B}^{\frac{1}{3}}_{3,\infty}), v\in L^3(0,T;{B}^{\frac{1}{3}}_{3,c(\mathbb{N})}).
 $$
	Using Corollary \ref{coro}, Lemma \ref{lem2.2} and the H\"older inequality, the first term $I_1$ on the RHS of \eqref{p5} can be handled as
	\begin{equation}\label{p6}
		\begin{aligned}
			|I_1|&\leq C \|(\rho v)^\varepsilon-\rho^\varepsilon v^\varepsilon\|_{L^{\frac{3}{2}}(0,T;L^{\frac{3}{2}}(\Omega))}\|\nabla v^\varepsilon\|_{L^3(0,T;L^3(\Omega))}\|\varphi\|_{L^\infty(0,T;L^\infty(\Omega))}\\
			&\leq o(1)\|\rho \|_{L^3(0,T;\dot{B}^{\frac{1}{3}}_{3,c(\mathbb{N})})}\|v\|_{L^3(0,T;\dot{B}^{\frac{1}{3}}_{3,c(\mathbb{N})} )}^2\\
			&\leq o(1).
		\end{aligned}
	\end{equation}
	For second term $I_2$, using integration by parts, conbination the Corollary \ref{coro} with Lemma \ref{lem2.2}, we have
	\begin{equation}\label{p7}
		\begin{aligned}
			|I_2|&=\left|\int_0^t\int_{\Omega} [(\rho v)^\varepsilon-(\rho^\varepsilon v^\varepsilon)]\left(-\frac{\partial_\tau \rho^\varepsilon}{(\rho^\varepsilon)^2}\varphi+\frac{1}{\rho^\varepsilon}\partial_\tau \varphi\right)dxd\tau\right|\\
			&=\left|\int_0^t\int_{\Omega} [(\rho v)^\varepsilon-(\rho^\varepsilon v^\varepsilon)]\left(\frac{\text{div}( \rho v)^\varepsilon}{(\rho^\varepsilon)^2}\varphi+\frac{1}{\rho^\varepsilon}\partial_\tau \varphi\right)dxd\tau\right|\\
			&\leq C\|(\rho v)^\varepsilon-\rho^\varepsilon v^\varepsilon\|_{L^{\frac{3}{2}}(0,T;L^{\frac{3}{2}}(\Omega))}\left(\|\nabla (\rho v)^\varepsilon\|_{L^3(0,T;L^3(\Omega))}+\|\partial_\tau \varphi\|_{L^3(0,T;L^3(\Omega))}\right)\\
			&\leq o(\varepsilon^{\frac{2}{3}})\|\rho \|_{L^3(0,T;\dot{B}^{\frac{1}{3}}_{3,c(\mathbb{N})} )}\|v\|_{L^3(0,T;{\dot{B}^{\frac{1}{3}}_{3,c(\mathbb{N})}})}\big(O(\varepsilon^{\frac{1}{3}-1})\|\rho v\|_{L^3(0,T;\dot{B}^{\frac{1}{3}}_{3,\infty} )}+1\big)\\
			&\leq o(1)+o(\varepsilon^{\frac{2}{3}}).
		\end{aligned}
	\end{equation}
	The third term $I_3$ can be estimated similarly as $I_2$:
	\begin{eqnarray}\label{p8}
		\begin{aligned}
			|I_3|&=\left|\int_0^t\int_{\Omega} [(\rho v\otimes v)^\varepsilon-(\rho v)^\varepsilon\otimes v^\varepsilon]\left(-\frac{\nabla \rho ^\varepsilon}{(\rho ^\varepsilon)^2}\varphi+\frac{\nabla \varphi}{\rho ^\varepsilon}\right)	dxd\tau \right|\\
			&\leq C\|(\rho v\otimes v)^\varepsilon-(\rho v)^\varepsilon \otimes v^\varepsilon\|_{L^{\frac{3}{2}}(0,T;L^{\frac{3}{2}}(\Omega))}\left(\|\nabla \rho ^\varepsilon\|_{L^3(0,T;L^3(\Omega))}+\|\nabla \varphi\|_{L^3(0,T;L^3(\Omega))}\right)\\
			&\leq o(\varepsilon^{\frac{2}{3}})\|\rho v\|_{L^3(0,T;\dot{B}^{\frac{1}{3}}_{3,\infty})}\|v\|_{L^3(0,T;\dot{B}^{\frac{1}{3}}_{3,c(\mathbb{N})})}\left(o(\varepsilon^{\frac{1}{3}-1})\|\rho\|_{L^3({\dot{B}^{\frac{1}{3}}_{3,c(\mathbb{N})}})}+1\right)\\
			&\leq o(1)+o(\varepsilon^{\frac{2}{3}}).
		\end{aligned}
	\end{eqnarray}
	For the last term $I_4$, notice that if  $\pi(s)\in C^2([a,b])$, then for any $s,s_0\in [a,b]$ there holds
	$$|\pi(s)-\pi(s_0)-\pi^{'}(s_0)(s-s_0)|\leq C(s-s_0)^2,$$
	where the constant $C$ can be independent of $s$ and $s_0$.
	This yields that
	\begin{equation}\label{p9}
		|\pi(\rho^\varepsilon(t,x))-\pi(\rho(t,x))-\pi^{'}(\rho (t,x))(\rho^\varepsilon(t,x)-\rho(t,x))|\leq C|\rho^\varepsilon(t,x)-\rho(t,x)|^2.
	\end{equation}
	Similarly, we also have
	\begin{equation}
		|\pi(\rho(t,y))-\pi(\rho(t,x))-\pi^{'}(\rho (t,x))(\rho (t,y)-\rho (t,x))|\leq C|\rho (t,y)-\rho (t,x)|^2.
	\end{equation}
	Applying convolution with respect to $y$ to the last inequality and invoking Jensen's inequality, we get
	\begin{equation}\label{p10}
		\begin{aligned}
			|\pi^\varepsilon(\rho(t,x))-\pi(\rho(t,x))-\pi^{'}(\rho (t,x))(\rho^\varepsilon 	 (t,x)-\rho (t,x))|\leq C(\rho (t,\cdot)-\rho (t,x))^2*\eta_{\varepsilon}(x),
		\end{aligned}
	\end{equation}
	which together with \eqref{p9} gives
	\begin{equation}\label{pi}
		\begin{aligned}
			|\pi(\rho^\varepsilon(t,x))-\pi^\varepsilon(\rho(t,x))|&\leq C|\rho^\varepsilon(t,x)-\rho(t,x)|^2+C(\rho (t,\cdot)-\rho (t,x))^2*\eta_{\varepsilon}(x).
		\end{aligned}
	\end{equation}
	Then it follows from integration by parts that $I_4$ can be handled as
	\begin{equation}\label{p11}
		\begin{aligned}
			|I_4|=&\left|\int_0^t\int_{\Omega} [\pi(\rho^\varepsilon)-\pi^\varepsilon(\rho)]\left(\frac{1}{\rho^\varepsilon}\text{div} \varphi-\varphi\cdot \frac{\nabla \rho^\varepsilon}{(\rho^\varepsilon)^2}\right)dxd\tau\right|\\
\leq& C\|\rho^\varepsilon(t,x)-\rho(t,x)|^2+(\rho (t,\cdot)-\rho (t,x))^2*\eta_{\varepsilon}(x)\|_{L^{\frac{3}{2}}(0,T;L^{\frac{3}{2}}(\Omega))}\\&\times\left(\|\nabla \varphi\|_{L^3(0,T;L^3(\Omega))}+\|\nabla \rho^\varepsilon\|_{L^3(0,T;L^3(\Omega))}\right)\\
\leq&  o(\varepsilon^{\frac{2}{3}})\|\rho\|_{L^3(0,T;\dot{B}^{\frac{1}{3}}_{3,c(\mathbb{N})})}^2
\left(1+o(\varepsilon^{\frac{1}{3}-1})
\|\rho\|_{L^3(0,T; \dot{B}^{\frac{1}{3}}_{3,c(\mathbb{N}) })}\right)\\
\leq & o(\varepsilon^{\frac{2}{3}})+o(1).
		\end{aligned}
	\end{equation}
	Hence, letting $\varepsilon\rightarrow 0$ and using the Lebesgue's dominated convergence theorem for LHS of \eqref{p5}, we have
	\begin{equation}
		\int_0^t \int_{\Omega} \left(v_\tau+\omega \times v+\nabla\left( \Pi(\rho)+\frac{1}{2}|v|^2\right)\right)\varphi  dxd\tau=0,
	\end{equation}
	which means that $(\rho,v)$ is also a weak solution of equations
	\begin{equation}\label{3.16}\left\{\ba
	&\rho_t+\nabla \cdot (\rho v)=0, \\
	& v_t+\omega \times v+\nabla\left( \Pi(\rho)+\frac{1}{2}|v|^2\right)=0.
	\ea\right.\end{equation}
	This concludes the proof of this proposition.
\end{proof}
\begin{proof}[Proof of Theorem \ref{the01}]
	Let $(\rho,v)$ be a weak solution to the compressible Euler equations \eqref{CEuler}. Combining \eqref{a1} and Proposition \ref{propo1}, we know that $(\rho,v)$ is also a weak solution to the system \eqref{3.16}. Then one can  mollify the equation \eqref{3.16} in space direction to deduce that
	$$\left\{\begin{aligned}
		&v^{\varepsilon}_{t} +(\omega\times v)^{\varepsilon}+\nabla(\Pi(\rho )+\f12|v|^{2})^{\varepsilon}= 0,\\
		&\omega^{\varepsilon}_{t}-\text{curl}( v\times\omega)^{\varepsilon}=0,\\
		& \text{div}\omega^{\varepsilon} =0.
	\end{aligned}\right.$$
	Consequently, by a straightforward computation, we have
	\be\label{ce3.1}\ba
	\f{d}{dt}\int_{\Omega}(v^{\varepsilon}\cdot\omega^{\varepsilon})dx=&\int_{\Omega} v_{t}^{\varepsilon}\cdot
	\omega^{\varepsilon}+\omega_{t}^{\varepsilon}\cdot v^{\varepsilon}dx\\
	=&\int_{\Omega}-\left((\omega\times v)^{\varepsilon}+\nabla(\Pi(\rho )+\f12|v|^{2})^{\varepsilon}\right)\cdot \omega^{\varepsilon}+
	\left(\text{curl}\left( v\times\omega\right)\right)^{\varepsilon}\cdot v^{\varepsilon}dx\\
	=&\int_{\Omega}-\left(\omega\times v\right)^{\varepsilon}\cdot \omega^{\varepsilon}+
	\left(   v\times\omega \right)^{\varepsilon}\cdot\omega^{\varepsilon}dx
	\\
	=&\int_{\Omega}-2\Big((\omega\times v)^{\varepsilon} -\omega^{\varepsilon}\times v^{\varepsilon} \Big)\cdot\omega^{\varepsilon}-
	2(\omega^{\varepsilon}\times v^{\varepsilon} )\cdot\omega^{\varepsilon}dx\\
	=&\int_{\Omega}-2\Big((\omega\times v)^{\varepsilon} -\omega^{\varepsilon}\times v^{\varepsilon} \Big)\cdot\omega^{\varepsilon}dx\\
=&I,
	\ea\ee
where we have used the identity that $\omega^\varepsilon\times v^\varepsilon\cdot v^\varepsilon=\omega^\varepsilon\times\omega ^\varepsilon\cdot v^\varepsilon=0$.

Next we will show that $I$ tends to zero as $\varepsilon\rightarrow0$.

(1) The H\"older inequality ensures that
\be\ba
	|I|\leq &C \|(\omega \times v)^\varepsilon-\omega^\varepsilon \times v^\varepsilon\|_{L^{\frac{3}{2}}(0,T;L^{\frac{3}{2}}(\Omega))}\|\omega ^\varepsilon\|_{L^3(0,T;L^3(\Omega))}\\
\leq &C \|(\omega \times v)^\varepsilon-\omega^\varepsilon \times v^\varepsilon\|_{L^{\frac{3}{2}}(0,T;L^{\frac{3}{2}}(\Omega))}\|\nabla v ^\varepsilon\|_{L^3(0,T;L^3(\Omega))}.
\ea\ee
By $v\in L^{3}(0,T;{B}^{\f13}_{3,c(\mathbb{N})})$,
$\omega\in L^{3}(0,T;\dot{B}^{\frac{1}{3}}_{3,\infty}) $  and  Lemma \ref{lem2.3}, we deduce that
$$
\|(\omega \times v)^\varepsilon-\omega^\varepsilon \times v^\varepsilon\|_{L^{\frac{3}{2}}(0,T;L^{\frac{3}{2}}(\Omega))}\leq  o(\varepsilon^{\f23}).
$$
Combining $v\in L^{3}(0,T;{B}^{\f13}_{3,c(\mathbb{N})})$
and  (2) in Lemma \ref{lem2.2},  we observe that
$$\|\nabla v ^\varepsilon\|_{L^3(0,T;L^3(\Omega))}\leq  o(\varepsilon^{-\f23}).$$
Hence there holds
$$|I| \leq o(1).$$

(2) By the H\"older inequality and Lemma  \ref{lem2.7}, we get
\begin{equation}
|	I|\leq C \|(\omega \times v)^\varepsilon-\omega^\varepsilon \times v^\varepsilon\|_{L^{\frac{3}{2}}(0,T;L^{\frac{3}{2}}(\Omega))}\|\omega ^\varepsilon\|_{L^3(0,T;L^3(\Omega))}\rightarrow 0,
\end{equation}
due to the condition that $v\in L^3(0,T;B^{\frac{1}{3}}_{3,c(\mathbb{N})}(\Omega))$ and $\omega \in L^3(0,T;L^3(\Omega))$.

(3) It follows from the H\"older inequality that
\be\ba
	|I|\leq &C \|(\omega \times v)^\varepsilon-\omega^\varepsilon \times v^\varepsilon\|_{L^{\frac{p}{p-1}}(0,T;L^{\frac{q}{q-1}})}\|\omega ^\varepsilon\|_{L^p(0,T;L^q)}
\ea\ee
and
$$
\|\omega \times v \|_{L^{\frac{p}{p-1}}(0,T;L^{\frac{q}{q-1}}(\Omega))}\leq C
\|  v \|_{L^{\frac{p}{p-2}}(0,T;L^{\frac{q}{q-2}}(\Omega))}\|\omega\|_{L^p(0,T;L^q(\Omega))}.
$$
Thus, by using the properties of the standard mollifiers,  we know that
$$\limsup_{\varepsilon\rightarrow0}|I|=0.
$$
(4) Recall the identical equation
$$
-\Delta A=\text{curl\,}\text{curl\,} A-\nabla \text{div\,} A.
$$
The classical elliptic estimate yields that
$$\|\nabla A\|_{L^{p}(\Omega)}\leq C(\|\text{curl\,}A\|_{L^{p}(\Omega)}+\|\text{div\,}A\|_{L^{p}(\Omega)}).
$$
Therefore, there holds
$$
\|\nabla v\|_{L^{\f94}(\Omega)}\leq C(\|\text{curl\,}v\|_{L^{\f94}(\Omega)}+\|\text{div\,}v\|_{L^{\f94}(\Omega)}).
$$
When dimemsion $d=3$, the Sobolev inequality implies that
$$\|v\|_{L^{9}(\Omega)}\leq C\|\nabla v\|_{L^{\f94}(\Omega)}.$$
Integrating this estimate in time direction, we derive
$$\|v\|_{L^{3}(0,T;L^{9}(\Omega))}\leq C\|\nabla v\|_{L^{3}(0,T;L^{\f94}(\Omega))}.$$
Finally, we get $v\in L^{3}(0,T;L^{9}(\Omega))$ and $\text{div\,}u,\omega\in L^{3}(0,T;L^{\f{9}{4}}(\Omega))$. From the previous case, we complete the proof of this case.
This concludes Theorem \ref{the01}.
\end{proof}

\section{ Helicity  conservation for the homogeneous incompressible Euler equations}
This section is devoted to the proof of  helicity conservation of weak solutions in   Onsager type spaces $\dot{B}^{\f13}_{p,c(\mathbb{N})}$  for  the  homogeneous incompressible Euler equations.

\begin{proof}[Proof of Theorem \ref{the1.1}]
	Let $v$ be a weak solution to the homogeneous  incompressible Euler equations \eqref{Euler}.  Then one can mollify the equation \eqref{euler} in space direction to deduce that
$$\left\{\begin{aligned}\label{GNS}
&v^{\varepsilon}_{t} +(\omega\times v)^{\varepsilon}+\nabla(\Pi +\f12|v|^{2})^{\varepsilon}= 0,\\
&\omega^{\varepsilon}_{t}-\text{curl}( v\times\omega)^{\varepsilon}=0,\\
& \text{div}\omega^{\varepsilon} =0.
\end{aligned}\right.$$
Consequently, by a straightforward computation, we have
\be\label{3.1}\ba
\f{d}{dt}\int_{\Omega}(v^{\varepsilon}\cdot\omega^{\varepsilon})dx=&\int_{\Omega} v_{t}^{\varepsilon}\cdot
\omega^{\varepsilon}+\omega_{t}^{\varepsilon}\cdot v^{\varepsilon}dx\\
=&\int_{\Omega}-\left((\omega\times v)^{\varepsilon}+\nabla(\Pi+\f12|v|^{2})^{\varepsilon}\right)\cdot \omega^{\varepsilon}+
 \left(\text{curl}\left( v\times\omega\right)\right)^{\varepsilon}\cdot v^{\varepsilon}dx\\
=&\int_{\Omega}-\left(\omega\times v\right)^{\varepsilon}\cdot \omega^{\varepsilon}+
\left(   v\times\omega \right)^{\varepsilon}\cdot\omega^{\varepsilon}dx
 \\
=&\int_{\Omega}-2\Big((\omega\times v)^{\varepsilon} -\omega^{\varepsilon}\times v^{\varepsilon} \Big)\cdot\omega^{\varepsilon}-
 2(\omega^{\varepsilon}\times v^{\varepsilon} )\cdot\omega^{\varepsilon}dx\\
 =&\int_{\Omega}-2\Big((\omega\times v)^{\varepsilon} -\omega^{\varepsilon}\times v^{\varepsilon} \Big)\cdot\omega^{\varepsilon}dx,
 \ea\ee
where  integration by parts and the fact that
 $(   v^{\varepsilon}\times\omega^{\varepsilon} )\cdot\omega^{\varepsilon}=   v^{\varepsilon}\cdot(\omega^{\varepsilon} \times\omega^{\varepsilon})=0$ were utilized.

From  $\eqref{identity}_{1}$ and the divergence-free condition that div$\,v=0$, we infer that
$$ \omega\times v=v\cdot\nabla v-\f{1}{2}\nabla |v|^{2}= \text{div}(v\otimes v)-\f{1}{2}\nabla |v|^{2},$$
which entails that
$$(\omega\times v )^{\varepsilon}=(v\cdot\nabla v)^{\varepsilon}-\f{1}{2}\nabla (|v|^{2})^{\varepsilon}= \text{div}(v\otimes v)^{\varepsilon}-\f{1}{2}\nabla (|v|^{2})^{\varepsilon}$$
and
$$ \omega^{\varepsilon}\times v^{\varepsilon}=v^{\varepsilon}\cdot\nabla v^{\varepsilon}-\f{1}{2}\nabla |v^{\varepsilon}|^{2}= \text{div}(v^{\varepsilon}\otimes v^{\varepsilon})-\f{1}{2}\nabla |v^{\varepsilon}|^{2}.$$
Plugging the latter two equations into \eqref{3.1}, we use integration by parts and observe that
$$\ba
\f{d}{dt}\int_{\Omega} v^{\varepsilon}\cdot\omega^{\varepsilon}dx=&-2\int_{\Omega}\text{div}\big((v\otimes v)^{\varepsilon} -(v^{\varepsilon}\otimes v^{\varepsilon})\big)\cdot \omega^{\varepsilon}+\frac{1}{2}\nabla \left(|v^\varepsilon|^2 -(|v|^2)^\varepsilon\right)\cdot \omega^\varepsilon dx\\
=& 2\int_{\Omega} \big((v\otimes v)^{\varepsilon}-v^{\varepsilon}\otimes v^{\varepsilon}\big) \nabla\omega^{\varepsilon}dx.
    \ea$$
Then integrating the latter relation in time over $(0,t)$, we see that
\be\label{3.2}
\int_{\Omega} v^{\varepsilon}(x,t)\cdot\omega^\varepsilon(x,t)dx- \int_{\Omega} v^{\varepsilon}(x,0)\cdot\omega^\varepsilon(x,0)dx
= 2 \int_{0}^{t}\int_{\Omega} \big(v^{\varepsilon}\otimes v^{\varepsilon}-(v\otimes v)^{\varepsilon}\big)\nabla\omega^{\varepsilon} dxds.
\ee
   Now we are in a position to apply Lemma  \ref{lem2.3} to complete the proof of Theorem \ref{the1.1}.

(1) Note that $ v\in L^{k} (0,T;\dot{B}^{\alpha}_{p,c(\mathbb{N})} )$, we apply Lemma \ref{lem2.3} to obtain
$$
\|(v\otimes v)^{\varepsilon}-v^{\varepsilon}\otimes v^{\varepsilon}\|_{L^{\f{k}{2}}(0,T;L^{\frac{p}{2}}(\Omega))}\leq  o(\varepsilon^{2\alpha}).
$$
Since
 $ \omega\in L^{ \ell  } (0,T; \dot{B}^{\beta}_{q,\infty}),$ we adopt Lemma
  \ref{lem2.2} to get
$$
\|\nabla \omega^{\varepsilon}\|_{L^{\ell}(0,T;L^{q}(\Omega))}\leq O(\varepsilon^{\beta -1}).
$$
In light of the H\"older inequality, we see that
$$\ba
&\left|\int_{0}^{t}\int_{\Omega}\big((v\otimes v)^{\varepsilon}-v^{\varepsilon}\otimes v^{\varepsilon}\big) \nabla\omega^{\varepsilon}dxds\right|\\\leq&C\|(v\otimes v)^{\varepsilon}-v^{\varepsilon}\otimes v^{\varepsilon}\|_{L^{\f{k}{2}}(0,T;L^{\frac{p}{2}}(\Omega))} \|\nabla \omega^{\varepsilon}\|_{L^{\ell}(0,T;L^{q}(\Omega))}\\
\leq& o(\varepsilon^{2\alpha})  O(\varepsilon^{\beta -1})\\
\leq&  o(\varepsilon^{\beta +2\alpha-1}).
\ea$$
Then letting $\varepsilon\rightarrow 0$ in \eqref{3.2}, this completes the proof of the first part.

(2) In the same manner as above, making use of Remark \ref{rem2.3}, we have
$$\ba
&\left|\int_{0}^{t}\int_{\Omega} \big((v\otimes v)^{\varepsilon}-v^{\varepsilon}\otimes v^{\varepsilon}\big) \nabla\omega^{\varepsilon} dxds\right|\\
\leq&C\|(v\otimes v)^{\varepsilon}-v^{\varepsilon}\otimes v^{\varepsilon}\|_{L^{\f{k}{2}}(0,T;L^{\frac{p}{2}}(\Omega))} \|\nabla\omega^{\varepsilon}\|_{L^{\ell}(0,T;L^{q}(\Omega))}\\
\leq& O(\varepsilon^{2\alpha})o(\varepsilon^{\beta-1})\\
\leq&   o(\varepsilon^{\beta +2\alpha-1}).
\ea$$
Then letting $\varepsilon\rightarrow 0$, this together with \eqref{3.2} yields the second part of Theorem \ref{the1.1}.

(3) Choose $q_{1}=q_{2}=\f{3d}{d+2}$. Then there holds $\f{2}{d}+\f{2d-2}{3d}=\f{d+2}{3d}+\f{d+2}{3d}$. Hence, we apply (2) in Lemma \ref{lem2.3} and Remark  \ref{rem2.4} to obtain
$$
\|(v\otimes v)^{\varepsilon}-v^{\varepsilon}\otimes v^{\varepsilon}\|_{L^{\f{3}{2}}(0,T;L^{\f{3d}{2d-2}}(\Omega))}\leq  o(\varepsilon^{\frac{2}{3}}),
$$
 where we have used the condition that
$\omega\in L^{3} (0,T; \dot{B}^{\f13}_{\f{3d}{d+2},c(\mathbb{N})}).$ Following the same path as above, we get
$$\ba
&\left|\int_{0}^{t}\int_{\Omega} \big((v\otimes v)^{\varepsilon}-v^{\varepsilon}\otimes v^{\varepsilon}\big) \nabla\omega^{\varepsilon} dxds\right| \\\leq&C\|(v\otimes v)^{\varepsilon}-v^{\varepsilon}\otimes v^{\varepsilon}\|_{L^{\f{3}{2}}(0,T;L^{\f{3d}{2d-2}}(\Omega))} \|\nabla\omega^{\varepsilon}\|_{L^{3} (0,T; L^{\f{3d}{d+2}}(\Omega))}\\
\leq& O(\varepsilon^{\f23})o(\varepsilon^{-\f23})\\
\leq&  o(1),
\ea$$
which together with \eqref{3.2} concludes the third part of this theorem.

\textbf{(An alternative approach to (3) and \eqref{ccfs}):}\\
Next we provide an alternative approach  to proving this part.
According to the boundedness of Riesz Transform in
homogeneous Besov spaces and the condition that $
 \omega\in L^{3} (0,T; \dot{B}^{\f13}_{\f{3d}{d+2},c(\mathbb{N})}),
$  we have  $
 \nabla v\in L^{3} (0,T; \dot{B}^{\f13}_{\f{3d}{d+2},c(\mathbb{N})}).
$
The Bernstein inequality further helps us to get $
   v\in L^{3} (0,T; \dot{B}^{\f43}_{\f{3d}{d+2},c(\mathbb{N})}).$
    Recall that $\dot{B}^{\f43}_{\f{3d}{d+2},c(\mathbb{N})}\hookrightarrow
    \dot{B}^{\f23}_{3,c(\mathbb{N})}$, we may achieve the desired  proof of (3) from \eqref{ccfs}. \\
    Subsequently, to make the paper more readable and   more self-contained, we also present the proof of \eqref{ccfs}.
 Notice that
$$\|\nabla\omega^{\varepsilon}\|_{L^{q}(\Omega)}\leq C\|\nabla^{2}v^{\varepsilon}\|_{L^{q}(\Omega)}~\text{for}~1<q<\infty,$$
which together with the fact that $ v\in  L^{3}(0,T;B^{\f23}_{3,c(\mathbb{N})}(\mathbb{R}^3))$ and Lemma \ref{lem2.2} means that
$$\|\nabla\omega^{\varepsilon}\|_{L^{3}(\Omega)}\leq C\|\nabla^{2}v^{\varepsilon}\|_{L^{3}(\Omega)}\leq o(\varepsilon^{-\frac{4}{3}}).$$
Using  the H\"older inequality and (1) in Lemma \ref{lem2.3}, we arrive at
$$\ba
&\left|\int_0^t\int_{\Omega} \big((v\otimes v)^{\varepsilon}-v^{\varepsilon}\otimes v^{\varepsilon}\big) \nabla\omega^{\varepsilon} dxds\right|\\\leq&C\|(v\otimes v)^{\varepsilon}-v^{\varepsilon}\otimes v^{\varepsilon}\|_{L^{\f32}(0,T;L^{\f32}(\Omega))} \|\nabla\omega^{\varepsilon}\|_{L^{3}(0,T;L^3(\Omega))}\\
\leq& o(\varepsilon^{\frac{4}{3}})  o(\varepsilon^{-\frac{4}{3}})\\
\leq& o(1).
\ea$$
This together with \eqref{3.2} gives  \eqref{ccfs}.

(4) Following the same path of derivation of  (2) in Theorem \ref{the1.1}, Lemma \ref{lem2.7} gives the proof of this part.

(5) With \eqref{3.2} and  div$\,\omega=0$ in hand, a slight variant of the   proof of  (2) in Theorem \ref{the1.1} provides the proof of this part.
\end{proof}
\section{  Helicity  conservation for 2-D surface quasi-geostrophic equations}
In this section, we are concerned with  the helicity  conservation of weak solutions to 2-D surface quasi-geostrophic equations.
\begin{proof}[Proof of Theorem \ref{the1.2}]
First, due to the divergence-free condition of velocity $v(x,t)$, we rewrite the equation \eqref{qg} as
\be\left\{\ba\label{qg1}
&\theta_{t} +\Div( v\otimes
\theta)=0,\\
&v(x,t)=(-\mathcal{R}_2 \theta, \mathcal{R}_1 \theta ),\\
&\theta|_{t=0}=\theta_0,
\ea\right.\ee
where $\mathcal{R}_i$ represents the Riesz transform for $i=1,2$.
Next, we regularize the equation \eqref{qg1} to obtain
$$\theta^{\varepsilon}_{t}+\partial_{j} (v_{j}\theta)^{\varepsilon}=0,$$
and
$$\partial_{i}\theta^{\varepsilon}_{t}+\partial_{j} (\partial_{i}v_{j}\theta)^{\varepsilon}+\partial_{j} (v_{j}\partial_{i}\theta)^{\varepsilon}=0.$$
Thus, it follows from a direct computation that for $i=1,2$,
\be\label{3.3}\ba
\f{d}{dt}\int_{\Omega}\theta^{\varepsilon}\partial_{i}\theta^{\varepsilon}dx
&=\int_{\Omega}\theta^{\varepsilon}\partial_{i}\partial_{t}\theta^{\varepsilon}dx
+\int_{\Omega}\partial_{t}\theta^{\varepsilon}\partial_{i}\theta^{\varepsilon}dx\\
&=-\int_{\Omega}\theta^{\varepsilon}[\partial_{j} (\partial_{i}v_{j}\theta)^{\varepsilon}+\partial_{j} (v_{j}\partial_{i}\theta)^{\varepsilon}]dx-\int_{\Omega} \partial_{j} (v_{j}\theta)^{\varepsilon}\partial_{i}\theta^{\varepsilon}dx.
\ea\ee
Using the divergence-free condition of $v(x,t)$ again, we have the fact that $\int \partial_{j} (\partial_{i}v_{j}^{\varepsilon}\theta^{\varepsilon})\theta^{\varepsilon}dx=0$. Then we can reformulate \eqref{3.3} as
$$\ba
\f{d}{dt}\int_{\Omega}\theta^{\varepsilon}\partial_{i}\theta^{\varepsilon}dx
=&-\int_{\Omega}\theta^{\varepsilon} \big[\partial_{j} (\partial_{i}v_{j}\theta)^{\varepsilon}-\partial_{j} (\partial_{i}v_{j}^{\varepsilon}\theta^{\varepsilon})\big] dx-\int_{\Omega}\theta^{\varepsilon}\big[\partial_{j} (v_{j}\partial_{i}\theta)^{\varepsilon} -\partial_{j} (v_{j}^{\varepsilon}\partial_{i}\theta^{\varepsilon})\big] dx\\&-\int_{\Omega}\theta^{\varepsilon}\partial_{j} (v_{j}^{\varepsilon}\partial_{i}\theta^{\varepsilon}) dx  -\int_{\Omega} \partial_{j} (v_{j}\theta)^{\varepsilon}\partial_{i}\theta^{\varepsilon}dx\\
=&\int_{\Omega}\partial_{j} \theta^{\varepsilon}\big [ (\partial_{i}v_{j}\theta)^{\varepsilon}- (\partial_{i}v_{j}^{\varepsilon}\theta^{\varepsilon})\big] dx+\int_{\Omega}\big[ (v_{j}\partial_{i}\theta)^{\varepsilon} - (v_{j}^{\varepsilon}\partial_{i}\theta^{\varepsilon})\big] \partial_{j}\theta^{\varepsilon} dx\\&-\int_{\Omega}\theta^{\varepsilon} v_{j}^{\varepsilon}\partial_{i}\partial_{j}\theta^{\varepsilon}  dx  +\int_{\Omega} (v_{j}\theta)^{\varepsilon}\partial_{j}\partial_{i}\theta^{\varepsilon}dx\\
=&\int_{\Omega}\partial_{j} \theta^{\varepsilon} \big[ (\partial_{i}v_{j}\theta)^{\varepsilon}- (\partial_{i}v_{j}^{\varepsilon}\theta^{\varepsilon})\big] dx+\int_{\Omega}\big[ (v_{j}\partial_{i}\theta)^{\varepsilon} - (v_{j}^{\varepsilon}\partial_{i}\theta^{\varepsilon})\big] \partial_{j}\theta^{\varepsilon} dx\\&+\int_{\Omega}\big[(v_{j}\theta)^{\varepsilon}-\theta^{\varepsilon} v_{j}^{\varepsilon}\big]\partial_{i}\partial_{j}\theta^{\varepsilon}  dx,
\ea$$
which leads to
$$\ba
&\int_{\Omega}\theta^{\varepsilon}(x,t)\partial_{i}\theta^{\varepsilon}(x,t)dx
-\int_{\Omega}\theta^{\varepsilon}(x,0)\partial_{i}\theta^{\varepsilon}(x,0)dx
\\
= &\int_{0}^{t}\int_{\Omega}\partial_{j} \theta^{\varepsilon}\big [ (\partial_{i}v_{j}\theta)^{\varepsilon}- (\partial_{i}v_{j}^{\varepsilon}\theta^{\varepsilon})\big] dxds+\int_{0}^{t}\int_{\Omega}\big[ (v_{j}\partial_{i}\theta)^{\varepsilon} - (v_{j}^{\varepsilon}\partial_{i}\theta^{\varepsilon})\big] \partial_{j}\theta^{\varepsilon} dxds\\&+\int_{0}^{t}\int_{\Omega}\big[(v_{j}\theta)^{\varepsilon}-\theta^{\varepsilon} v_{j}^{\varepsilon}\big]\partial_{i}\partial_{j}\theta^{\varepsilon}  dxds  \\
=:&
I+II+III.
\ea$$
For  the first term $I$, we conclude by the H\"older inequality that
$$
|I|\leq\|(\partial_{i}v_{j}\theta)^{\varepsilon}- (\partial_{i}v_{j}^{\varepsilon}\theta^{\varepsilon}) \|_{L^{\f32}(0,T;L^{\f65}(\Omega))} \| \partial_{j} \theta^{\varepsilon}\|_{L^{3}(0,T;L^{6}(\Omega))}.
$$
Take
$q=\f65,d=2, q_{1}=\f32, q_{2}=\f32<2.$ Note that $\f12+\f56=\frac{1}{q_{1}}+\frac{1}{q_{2}}$ and
 $\nabla\theta\in L^{3}(0,T;\dot{B}^{\f13}_{ \f32, c(\mathbb{N})})$, we can invoke (3) in Lemma \ref{lem2.3} to get
$$\|(\partial_{i}v_{j}\theta)^{\varepsilon}- (\partial_{i}v_{j}^{\varepsilon}\theta^{\varepsilon}) \|_{L^{\f32}(0,T;L^{\f65}(\Omega))}
\leq o(\varepsilon^{\frac{2}{3}}).$$
Combining
Sobolev's inequality and  Lemma \ref{lem2.2}, we deduce from $\nabla\theta\in L^{3}(0,T;\dot{B}^{\f13}_{ \f32, c(\mathbb{N})}) $ that
$$ \| \partial_{j} \theta^{\varepsilon}\|_{L^{3}(0,T;L^{6}(\Omega))}\leq C\|\nabla \partial_{j} \theta^{\varepsilon}\|_{L^{3}(0,T;L^{\f32}(\Omega))}\leq o(\varepsilon^{-\frac{2}{3}} ).$$
Collecting the above estimates, we get
$$
|I|\leq o(1 ).
$$
Likewise, we have
$$
|II|\leq o(1).
$$
It remains to estimate the third term $III$. Using the H\"older inequality, we find
$$
|III|\leq  \|(v_{j}\theta)^{\varepsilon}-\theta^{\varepsilon} v_{j}^{\varepsilon} \|_{L^{\f32}(0,T;L^{3}(\Omega))} \| \partial_{i}\partial_{j}\theta^{\varepsilon}\|_{L^{3}(0,T;L^{\f32}(\Omega))}.
$$
Choosing
$q=3,d=2, q_{1}=\f32=q_{2}=\f32<2,$  (2) in Lemma \ref{lem2.3} enables us
 to get
$$ \|(v_{j}\theta)^{\varepsilon}-\theta^{\varepsilon} v_{j}^{\varepsilon} \|_{L^{\f32}(0,T;L^{3}(\Omega))}\leq o(\varepsilon^{\frac{2}{3} } ),$$
where we have used the condition that $\nabla\theta\in L^{3}(0,T;\dot{B}^{\f13}_{ \f32, c(\mathbb{N})})$.
Besides, Lemma \ref{lem2.2} allows us to conclude that
$$\| \partial_{i}\partial_{j}\theta^{\varepsilon}\|_{L^{3}(0,T;L^{\f32}(\Omega))}\leq o(\varepsilon^{-\frac{2}{3}} ).$$
In summary, we know that
$$|III|\leq o(1 ).$$
We achieve the proof of this theorem.
\end{proof}

\section*{Acknowledgements}

 Wang was partially supported by  the National Natural
 Science Foundation of China under grant (No. 11971446, No. 12071113   and  No.  11601492).
 Wei was partially supported by the National Natural Science Foundation of China under grant (No. 11601423, No. 11871057).  Ye was partially supported by the National Natural Science Foundation of China  under grant (No.11701145) and China Postdoctoral Science Foundation (No. 2020M672196).

\end{document}